\DeclareMathAlphabet{\mathpzc}{OT1}{pzc}{m}{it}
\theoremstyle{definition}
\newtheorem{defi}{Definition}[section]
\newtheorem{exam}[defi]{Example}
\theoremstyle{theorem}
\newtheorem{theo}[defi]{Theorem}
\newtheorem*{theo*}{Theorem}
\newtheorem{prop}[defi]{Proposition}
\newtheorem{coro}[defi]{Corollary}
\newtheorem{lemm}[defi]{Lemma}
\theoremstyle{remark}
\newtheorem{rema}[defi]{Remark}
\newcommand{\Natural}{\mathbb{N}}
\newcommand{\Nonnegative}{\mathbb{N}_0}
\newcommand{\Integer}{\mathbb{Z}}
\newcommand{\Complex}{\mathbb{C}}
\newcommand{\abs}[1]{\left\lvert #1 \right\rvert}
\newcommand{\norm}[1]{\left\lVert #1 \right\rVert}
\newcommand{\setcond}[2]{\left\{ #1 \,;\, #2 \right\}}
\newcommand{\diam}{\mathrm{diam}}
\renewcommand{\set}[1]{\left\{ #1 \right\}}
\title[Adjacency and transition matrices]{The adjacency matrices and the transition matrices related to random walks on graphs}
\author[T. Ikkai]{Tomohiro Ikkai}
\author[H. Ohno]{Hiromichi Ohno}
\author[Y. Sawada]{Yusuke Sawada}
\address{T. Ikkai: Graduate school of mathematics, Nagoya University, Furocho, Chikusaku, Nagoya, 464-8602, Japan}
\email{m13006z@math.nagoya-u.ac.jp}
\address{H. Ohno: Department of Mathematics, Faculty of Engineering, Shinshu University, 4-17-1 Wakasato, Nagano 380-8553, Japan}
\email{h\_ohno@shinshu-u.ac.jp}
\address{Y. Sawada: Graduate school of mathematics, Nagoya University, Furocho, Chikusaku, Nagoya, 464-8602, Japan}
\email{m14017c@math.nagoya-u.ac.jp}
\begin{document}
\maketitle
\begin{abstract}
A pointed graph $(\Gamma,v_0)$ induces a family of transition matrices in Wildberger's construction of a hermitian hypergroup via a random walk on $\Gamma$ starting from $v_0$. We will give a necessary condition for producing a hermitian hypergroup as we assume a weaker condition than the distance-regularity for $(\Gamma,v_0)$. The condition obtained in this paper connects the transition matrices and the adjacency matrices associated with $\Gamma$.
\end{abstract}

%%%%%%%%%%%%%%%%%%%%%%%%%%%%%%%%%%%%%%%%%%%%%%%%%%%%%%%%%%%%%%%%%%%%%%%%

\section{Introduction} \label{secIntro}

The concept of (locally compact) hypergroups was introduced in 1970s. 
It is said that hypergroups originated in statistics, 
and they have an aspect as a probabilistic extension of locally compact groups. 
We can find an account on harmonic analysis and representation theory 
of hypergroups, as analogies of those of topological groups, in Bloom and Heyer's monograph \cite{bloo-heye95}. 

Hypergroups are sometimes utilized to describe probabilistic phenomena. 
One of the interesting examples of utilization is Wildberger's study \cite{wild94, wild95} 
on the connection between finite (discrete) hypergroups and random walks on graphs. 
What he did is producing certain finite hypergroups connected with random walks 
on the platonic solid graphs. 

Subsequently, the first and third authors formulated 
Wildberger's construction of hypergroups in \cite{ikka-sawa19}.  
They revealed that his method can be applied to any distance-regular graphs 
including some infinite graphs. 
Moreover, they discovered several examples that produce discrete hypergroups 
in the same way as his. 

Wildberger's method requires that a graph $\Gamma = (V, E)$, 
where $V$ denotes the vertex set and $E$ the edge set of $\Gamma$, is at least simple, connected and self-centered. (A weaker condition given in \cite{endo-mimu-sawa20}). 
Meeting these requirement, 
a random walk on a graph induces a convolution $\circ$ and an involution $*$ 
to a certain vector space over $\Complex$. 
If this convolution is both associative and commutative, 
that random walk on $\Gamma$ is said to produce a discrete hypergroup. 
We need to note that not all random walks produce discrete hypergroups. 
Here a problem arises: On what kind of graphs random walks produce 
discrete hypergroups in his method? 
We shall say it to be ``hypergroup productive'' 
a pair $(\Gamma, v_0)$ of a graph $\Gamma$ and a vertex $v_0$
if a random walk on $\Gamma$ starting from $v_0$ produces a discrete hypergroup 
(cf.\ Definition \ref{defHGP}). 

The first attempt to solve this problem is Endo-Mimura-Sawada's study 
\cite{endo-mimu-sawa20}. 
Investigating the distance distributions of random walks on graphs, 
they conjectured that a graph-vertex pair $(\Gamma, v_0)$ would be 
hypergroup productive when $(\Gamma, v_0)$ satisfied the good symmetry conditions, 
somewhat weaker than the distance-regularity. 
Those conditions will be introduced as (S1) and (S2) later. 

The investigations in this paper will provide a matrix theoretical approach 
to solving the problem. 
%The authors discovered that the convolution $\circ$ derived from a random walk has 
%a connection with the ``$k$-adjacency matrices'' $A^{(k)}$'s, defined as 
The authors discovered that the associativity and the commutativity of 
the convolution $\circ$ derived from a random walk can be written in terms of 
the ``$k$-adjacency matrices'' $A^{(k)}$'s, defined as 
\begin{equation*}
A^{(k)} = \left( A^{(k)}_{x, y} \right)_{x, y \in V} = 
\begin{cases}
1 & (d(x, y) = k), \\
0 & (d(x, y) \neq k), 
\end{cases}
\end{equation*}
of the given graph.  

%The main results in this paper are summarized into the following theorem. 
%\begin{theo*}
%Let $\Gamma = (V, E)$ be a graph and $v_0 \in V$. 
%Suppose that the graph-vertex pair $(\Gamma, v_0)$ satisfies 
%the good symmetry conditions (S1) and (S2). 

%If the $k$-adjacency matrices $A^{(k)}$'s of $\Gamma$ mutually commute, that is, 
%$A^{(k)} A^{(l)} = A^{(l)} A^{(k)}$ holds for every $k$, $l$ 
%with $0 \leq k, l < \diam(\Gamma)+1$, 
%then the pair $(\Gamma, v_0)$ is hypergroup productive. 
%(The symbol $\diam(\Gamma)$ denotes the diameter of $\Gamma$.)
%\end{theo*}
%The precise form of this theorem will be stated as Theorem \ref{assoA_k}. 
%%主定理は定理3.5とする. 

%It has been necessary troublesome calculations of the convolutions so far 
%in order to determine whether the given graph-vertex pair $(\Gamma, v_0)$ 
%is hypergroup productive. 
%This theorem enables us to check the hypergroup productivity of $(\Gamma, v_0)$ 
%without any calculations of the convolutions. 

The main results in this paper imply the following theorem. 
\begin{theo*}
Let $\Gamma = (V, E)$ be a graph and $v_0 \in V$. 
Suppose that the graph-vertex pair $(\Gamma, v_0)$ satisfies 
the good symmetry conditions (S1) and (S2). 

If the $k$-adjacency matrices $A^{(k)}$'s of $\Gamma$ mutually commute, that is, 
$A^{(k)} A^{(l)} = A^{(l)} A^{(k)}$ holds for every $k$, $l$ 
with $0 \leq k, l < \diam(\Gamma)+1$, 
then the pair $(\Gamma, v_0)$ is hypergroup productive. 
(The symbol $\diam(\Gamma)$ denotes the diameter of $\Gamma$.)
\end{theo*}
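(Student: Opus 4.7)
The plan is to reduce the hypergroup-productivity condition for $(\Gamma, v_0)$ --- that is, the associativity and commutativity of the convolution $\circ$ produced by the random walk --- to matrix identities involving the $k$-adjacency matrices $A^{(k)}$, and then to deduce these identities from the assumed pairwise commutativity.

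First, I would exploit the reformulation announced in the introduction: under the good symmetry conditions (S1) and (S2), the convolution $c_i \circ c_j$ on the canonical basis should be expressible in the form $c_i \circ c_j = \sum_k \alpha^k_{ij} c_k$, where the structure constants $\alpha^k_{ij}$ are read off from a single row of $A^{(i)} A^{(j)}$ indexed by $v_0$, normalized by a distance-dependent weight. The essential point is that (S1) and (S2) should force the entry $(A^{(i)} A^{(j)})_{v_0, y} = \sum_{x \in V} A^{(i)}_{v_0, x} A^{(j)}_{x, y}$ to depend on $y$ only through $d(v_0, y)$; establishing this independence from the specific choice of $y$ at distance $k$ from $v_0$ is the first technical task.

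Once this matrix description of $\circ$ is in hand, both algebraic properties follow by routine calculation. Commutativity, $c_i \circ c_j = c_j \circ c_i$, reduces to the entrywise equality $A^{(i)} A^{(j)} = A^{(j)} A^{(i)}$, which is precisely the hypothesis. For associativity, I would write the two bracketings $(c_i \circ c_j) \circ c_l$ and $c_i \circ (c_j \circ c_l)$, expand each as a sum of basis elements, and identify the coefficients with suitably normalized entries of the triple product $A^{(i)} A^{(j)} A^{(l)}$. The associativity of ordinary matrix multiplication then closes the argument, while mutual commutativity is invoked to justify rearranging the intermediate summations so that they live consistently over the same triple product.

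The main obstacle I anticipate is precisely this translation step: showing, under only (S1) and (S2) and not full distance-regularity, that the convolution's structure constants genuinely coincide with the appropriately normalized matrix entries of products of the $A^{(k)}$'s. In the distance-regular setting this is automatic via the Bose--Mesner algebra and its intersection numbers, but here one must extract the analogous well-definedness from the weaker hypotheses (S1) and (S2). Once that bridge is in place, the commutativity assumption on the $A^{(k)}$'s delivers the final conclusion in only a few lines of computation.
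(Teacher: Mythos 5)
Your outline is correct, but it follows a genuinely different route from the paper's. You propose to read the structure constants off the $v_0$-row of $A^{(i)}A^{(j)}$: indeed $(A^{(i)}A^{(j)})_{v_0,y}=\abs{S_i(v_0)\cap S_j(y)}$, which by (S2) depends on $y$ only through $k=d(v_0,y)$, and a double count over $S_i(v_0)\times S_k(v_0)$ together with (S1) gives $p_{i,j}^{k}=\frac{\mu_k}{\mu_i\mu_j}(A^{(i)}A^{(j)})_{v_0,y}$ for $y\in S_k(v_0)$ --- this bridge you worry about is exactly the paper's Lemma \ref{commlemm}, so it is available under (S1) and (S2) alone. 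Your coefficient comparison for associativity then goes through, with one caveat of emphasis: the two bracketings yield (after grouping the intermediate sum over spheres, using (S2)-constancy again) the $v_0$-rows of $A^{(i)}A^{(j)}A^{(l)}$ and of $A^{(j)}A^{(l)}A^{(i)}$ respectively, so it is not matrix associativity but the assumed mutual commutation of the $A^{(k)}$'s that closes the argument --- which you do anticipate when you invoke commutativity to put both sums over the same triple product. The paper proceeds differently: it introduces the sphere-summing surjection $D:\Complex^{V}\to\Complex^{I(\Gamma,v_0)}$ and the normalized matrices $A_k=A^{(k)}/\mu_k$, proves that commutativity of the convolution is equivalent to the intertwining relations $P_hD=DA_h$ (Theorem \ref{PD=DA}), shows that for a commutative pre-hypergroup associativity is equivalent to mutual commutation of the transition matrices (Lemma \ref{assocomm}), and transfers $DA_kA_l=DA_lA_k$ to $P_kP_l=P_lP_k$ via surjectivity of $D$ (Theorem \ref{assoA_k}), of which the stated theorem is Corollary \ref{corMain}. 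Your route is more elementary and self-contained, essentially reconstructing a Bose--Mesner-type computation from (S1)/(S2); the paper's detour through $D$ and the $P_k$'s buys a sharper, necessary-and-sufficient criterion ($DA_kA_l=DA_lA_k$) and the intertwining $P_hD=DA_h$, which are reused for the distance-regular corollary and the diameter-two application.
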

The precise form of this theorem will be stated as Theorem \ref{assoA_k}. 
%%主定理は定理3.5とする. 

It has been necessary troublesome calculations of the convolutions so far 
in order to determine whether the given graph-vertex pair $(\Gamma, v_0)$ 
is hypergroup productive. 
This theorem enables us to check the hypergroup productivity of $(\Gamma, v_0)$ 
without any calculations of the convolutions.

This study is motivated by the theory of association schemes, 
the main topic in algebraic combinatorics. 

It is well known that every association scheme provides an algebra of matrices, 
called the Bose-Mesner algebra. 
Wildberger \cite{wild95} mentioned that a Bose-Mesner algebra always yields 
a discrete hypergroup, 
so one has a discrete hypergroup derived from an association scheme. 
On the other hand, every association scheme corresponds to a distance-regular graph. 
It was proved in \cite{ikka-sawa19} that a graph-vertex pair $(\Gamma, v_0)$ is hypergroup productive whenever $\Gamma$ is a distance-regular graph, 
so one has another discrete hypergroup derived from an association scheme. 
In fact, these two hypergroups are isomorphic, which was proved in \cite{ikka-sawa19}. 

This fact tells us that the discrete hypergroup derived from a distance-regular graph 
$\Gamma$ contains the same combinatorial infomation of $\Gamma$ 
as the corresponding association scheme. 
One can infer that the discrete hypergroup derived from a hypergroup productive graph-vertex pair $(\Gamma, v_0)$ contains some combinatorial infomation of 
$\Gamma$. 
Hence it is significant to investigate such discrete hypergroups. 
The authors' study in this paper should be regarded as the foundation of studies 
on the connections between hypergroups and random walks on graphs. 

As the conclusion of this section, the contents of this paper will be shown here. 

Section \ref{secMethod} is devoted to explaining the fundamental theory 
based on the preceding studies \cite{endo-mimu-sawa20, ikka-sawa19, wild94, wild95}. 
That section serves also the definitions and the notations from graph theory 
and hypergroup theory for the following sections. 

Section \ref{secMain} furnishes one with the main results, 
which yield the above theorem. 
Two kinds of matrices will play a key role. 
One is the $k$-adjacency matrices $A^{(k)}$, which was introduced above. 
The other is the $k$-transition matrices $P_k$, which describes the distance distribution of the random walks in question.  
Theorems \ref{PD=DA} and \ref{assoA_k} are so important that 
they are the keys of the proof of the above theorem. 

As an application of the main results, we discuss the simplest case in Section 
\ref{secApplication}, when a given graph is of diameter two. 
In this case, we can determinate whether a given graph-vertex pair with the conditions (S1) and (S2) produces a discrete hypergroup by the above theorem. 
However, we need some combinatorial calculations to obtain the structure 
of the hypergroup derived from that graph-vertex pair. 

%%%%%%%%%%%%%%%%%%%%%%%%%%%%%%%%%%%%%%%%%%%%%%%%%%%%%%%%%%%%%%%%%%%%%%%%

\section{Wildberger's construction} \label{secMethod}

Let $\Natural$, $\Nonnegative$, $\Integer$ and $\Complex$ denote 
the set of all positive integers, $\Natural \cup \set{0}$, the ring of all integers 
and the field of the complex numbers, respectively. 
In addition, we use the following notations. 
\begin{itemize}
\item 
For a set $S$, let $\abs{S}$ denote the cardinality of $S$. 
We do not distinguish the cardinalities of infinite sets, 
and therefore we write simply $\abs{S} = \infty$ if $S$ is an infinite set. 

\item
For a set $S$, let $\Complex S$ denote the free vector space generated by $S$ 
over $\Complex$, that is, 
\begin{equation*}
\Complex S = \setcond{\sum_{i=1}^{n} a_i x_i}{n \in \Natural, 
a_1, \dots, a_n \in \Complex, x_1, \dots, x_n \in S}. 
\end{equation*}

\item
Let $\delta_{i, j}$ denote the Kronecker delta 
\begin{equation*}
\delta_{i, j} = 
\begin{cases}
1 & (i = j), \\
0 & (i \neq j). 
\end{cases}
\end{equation*}

\item
For a set $S$, let $\Complex^S = \setcond{(a_i)_{i \in S}}{a_i \in \Complex}$ denote 
the complex vector space consisting of all sequences indexed by $S$. 

\item 
Let $\bm{1}$ denote the identity matrix. 
Note that the order of $\bm{1}$ depends on the context. 
\end{itemize}

In this section, we will prepare notations related with graphs 
and recall Wildberger's method of construction of a discrete hypergroup 
from a given graph. 

\subsection{Definitions and notations from graph theory}

We refer the reader to \cite{bigg93} for the basic notions of graphs. %冠詞？
In this paper, we assume that any graph is simple, connected, locally finite, 
self-centered, and has at most countably many vertices. 
We use the notation $d(v, w)$ to express the distance on a graph 
between two vertices $v$ and $w$. 

Let $\Gamma$ be a graph with the vertex set $V$. 
For each $k \in \Nonnegative$ and $v \in V$, we define the subset $S_k(v)$ of $V$ as 
\begin{equation*}
S_k(v) = \setcond{w \in V}{d(v, w) = k}. 
\end{equation*}
Fixing a vertex $v_0 \in V$ as a \textit{base point}, 
we call the pair $(\Gamma, v_0)$ a \textit{pointed graph}. 
We define the set $I(\Gamma, v_0)$ of non-negative integers as 
\begin{equation*}
I(\Gamma, v_0) = \set{d(v_0, v)}_{v \in V} = 
\begin{cases}
\set{0, 1, \dots, \diam(\Gamma)} & (\abs{V} < \infty), \\
\Nonnegative & (\abs{V} = \infty), 
\end{cases} 
\end{equation*}
where $\diam(\Gamma)$ denotes the diameter of $\Gamma$, 
defined by
\[
\diam(\Gamma) = \sup \setcond{d(v, w)}{v, w \in V}.
\] 
Under the assumption that $\Gamma$ is self-centered, 
it is obvious that $S_k(v) = \varnothing$ when $k \notin I(\Gamma, v_0)$. 

%Here, the notation $d(x,y)$ means the distance between vertices $x$ and $y$, 
%that is, the length of the shortest path from $x$ to $y$. 
%Also, let $\mu_n$ denote the cardinality of $S_n(v)$. 

Here we give the definition of ``$k$-adjacency matrices,'' an extended concept. 

\begin{defi}
Let $\Gamma$ be a graph with the vertex set $V$ and $k \in I(\Gamma,v_0)$. 
We call the symmetric matrix 
\begin{equation*}
A^{(k)} = (\delta_{k, d(x, y)})_{x, y \in V} 
\end{equation*}
the \textit{$k$-adjacency matrix} associated with $\Gamma$. 
\end{defi}

Note that the $0$-adjacency matrix $A^{(0)}$ is the identity matrix. 
The $1$-adjacency matrix $A^{(1)}$ coincides with the ordinary adjacency matrix. 
Note that the sum $\sum_{k \in I(\Gamma, v_0)} A^{(k)}$ of the $k$-adjacency matrices 
is equal to the matrix $J$ all of whose entries are $1$. 

%%%%%%%%%%%%%%%%%%%%%%%%%%%%%%%%%%%%%%%%%%%%%%%%%%%%%%%

In this paper, unless otherwise specifically noted, 
the symbols $V$ and $A^{(k)}$ denote the vertex set of a graph $\Gamma$ 
and the $k$-adjacency matrix associated with $\Gamma$, respectively.

%%%%%%%%%%%%%%%%%%%%%%%%%%%%%%%%%%%%%%%%%%%%%%%%%%%%%%%

\subsection{Discrete hypergroups} 

The concept of hypergroups is the probability theoretic extension 
of locally compact groups, 
introduced in \cite{dunk73}, \cite{jewe75} and \cite{spec78}. 
%We refer the reader to \cite{bloo-heye95} for the general theory of hypergroups. 
That of discrete hypergroups is such an extension of discrete groups. 
First, we give the definition of discrete hypergroups. 

%%%%%%%%%%%%%%%%%%%%%%%%%%%%%%%%%%%%%%%%%%%%%%%%%%%%%%%

\begin{defi} \label{defDHG}
Let $H = \set{x_i}_{i \in I}$ be a set of indeterminates indexed by a set 
$I = \set{0, 1, 2, \dots, N}$ for some $N \in \Nonnegative$ or $I = \Nonnegative$. 
The triple $(H, \circ, *)$ consisting of the set $H$, 
a binary operation $\circ: \Complex H \times \Complex H \to \Complex H$ 
called the \textit{convolution} 
and a map $*: \Complex H \to \Complex H$ called the \textit{involution} is said to be 
a \textit{discrete hypergroup} if the following three conditions are satisfied. 

\begin{enumerate}
\item
The triple $(\mathbb{C}H, \circ, *)$ forms a $*$-algebra with the unit $x_0 \in H$. 

\item
The restriction $*|_H$ of the involution $*$ maps $H$ onto $H$. 

\item
For each $i$, $j \in I$, we let $x_i \circ x_j = \sum_{k\in I}q_{i, j}^{k} x_k$ 
with some $q_{i, j}^{k} \in \Complex\ (k\in
I)$. 
Then we have $q_{i, j}^{k} \geq 0$ for all $k\in I$ and 
$\sum_{k\in I} q_{i, j}^{k} = 1$.
Moreover, $q_{i, j}^{0} \neq 0$ if and only if $x_i = x_j^*$. 
\end{enumerate}

We often say $H$, instead of $(H, \circ, *)$, to be a discrete hypergroup as usual. 
If the $*$-algebra $(\Complex H, \circ, *)$ is commutative, 
then the discrete hypergroup $(H, \circ, *)$ is said to be \textit{commutative}. 
If $*|_H$ is the identity map, then the discrete hypergroup $(H, \circ, *)$ is said 
to be \textit{hermitian}. 
\end{defi}

%%%%%%%%%%%%%%%%%%%%%%%%%%%%%%%%%%%%%%%%%%%%%%%%%%%%%%%%

It is easily seen that any hermitian (discrete) hypergroup $(H, \circ, *)$ 
must be commutative. 
See \cite{lass05} for the general theory of discrete hypergroups, for example. 

For convenience, we use the following term for the incomplete discrete hypergroups. 

\begin{defi} 
As the above definition, let $H = \set{x_i}_{i \in I}$ be a set of indeterminates indexed by a set 
$I = \set{0, 1, 2, \dots, N}$ for some $N \in \Nonnegative$ or $I = \Nonnegative$, and $\circ: \Complex H \times \Complex H \to \Complex H$ a binary operation.
The pair $(H, \circ)$ consisting of the set $H$ and the binary operation 
$\circ$ (or $H$) is said to be 
a \textit{pre-hypergroup} if the following two conditions are satisfied. 

\begin{enumerate}
\item The pair $(\Complex H, \circ)$ forms an algebra with the unit $x_0 \in H$ 
which is not necessarily associative. 

\item For each $i$, $j \in I$, we let $x_i \circ x_j = \sum_{k\in I}q_{i, j}^{k} x_k$ 
with some $q_{i, j}^{k} \in \Complex\ (k\in
I)$. 
Then we have $q_{i, j}^{k} \geq 0$ for all $k\in I$ and 
$\sum_{k\in I} q_{i, j}^{k} = 1$. 
Moreover, $q_{i, j}^{0} \neq 0$ if and only if $x_i = x_j$. 
\end{enumerate}
\end{defi} 

Equipped with the involution $*$ given by the conjugate-linear extension 
of the identity map, 
a commutative and associative pre-hypergroup becomes a hermitian hypergroup. 

%%%%%%%%%%%%%%%%%%%%%%%%%%%%%%%%%%%%%%%%%%%%%%%%%%%%%%%%

\subsection{Wildberger's construction}

Wildberger \cite{wild94, wild95} has introduced the idea 
that random walks on a certain kind of graphs can produce hermitian hypergroups. 
%His idea has been formulated in \cite{ikka-sawa19}. 
Let us recall his idea in the formulated form. 

Given a pointed graph $(\Gamma, v_0)$, 
we take the set $H(\Gamma, v_0) = \set{x_i}_{i \in I(\Gamma, v_0)}$ of indeterminates. 

We define the binary operation $\circ$ on $\mathbb{C} H(\Gamma, v_0)$ by
\[
x_i \circ x_j = \sum_{k \in I(\Gamma, v_0)} p_{i, j}^{k} x_k
\]
for each 
$i$, $j \in I(\Gamma, v_0)$, where
\begin{equation} \label{pijk}
p_{i, j}^{k} = 
\frac{1}{\abs{S_{i}(v_0)}} \sum_{v \in S_{i}(v_0)} 
\frac{\abs{S_{j}(v) \cap S_{k}(v_0)}}{\abs{S_{j}(v)}}
\end{equation}
for each $k \in I(\Gamma, v_0)$. 
Note that the locally finiteness of $\Gamma$ ensures 
that the sum in \eqref{pijk} is a finite sum, 
and the self-centeredness of $\Gamma$ ensures 
that each $p_{i, j}^{k}$ is well-defined. 
(For details, see \cite[Proposition 3.1]{ikka-sawa19}.)
In addition, we define the involution $*$ on $\mathbb{C} H(\Gamma, v_0)$ 
by the unique extension of the identity map on $H(\Gamma, v_0)$. 
%We call the set $H(\Gamma, v_0)$ equipped with these two operations 
%a \textit{pre-hypergroup} derived from $(\Gamma, v_0)$. 

\begin{rema}
The sequence $(p_{i, j}^{k})_{k \in I(\Gamma, v_0)}$ forms the distribution of distances 
between the base point $v_0$ and a random vertex $w \in S_{j}(v)$ 
for a random vertex $v \in S_{i}(v_0)$. 
In other words, $p_{i, j}^{k}$ stands for the probability 
that a random walker is in $S_k(v_0)$ after two consecutive jumps 
from $v_0$ to a random vertex $v \in S_i(v_0)$ 
and from $v$ to a random vertex $w \in S_j(v)$. 
\end{rema}

We should note that a pre-hypergroup is not always a hermitian hypergroup. 
%Any graph does not produce a hermitian hypergroup in the above construction. 
In \cite[Theorem 3.3]{ikka-sawa19}, it was shown that a pre-hypergroup $H(\Gamma, v_0)$ becomes 
a hermitian hypergroup whenever $\Gamma$ is a distance-regular graph. 
%(For the definition of distance-regular graphs, see \cite{bcn89} for example.)
In addition, there exist some examples of non-distance-regular graphs 
which produce hermitian hypergroups by the above process. 
Here, we refer the reader to \cite{bcn89} for the general theory of distance-regular graphs. 
To check that a pre-hypergroup $H(\Gamma, v_0)$ becomes a hermitian hypergroup, 
by \cite[Proposition 3.2]{ikka-sawa19}, 
it is enough to show that the convolution $\circ$ satisfies the associativity 
and the commutativity. 

For convenience, we use the following terms introduced in \cite{ikka-sawa19}. 

\begin{defi} \label{defHGP}
A pointed graph $(\Gamma, v_0)$ is said to be \textit{hypergroup productive} 
if the pre-hypergroup $H(\Gamma, v_0)$ becomes a hermitian hypergroup.  
If $(\Gamma, v_0)$ is hypergroup productive for every $v_0 \in V$, 
the graph $\Gamma$ is said to be \textit{hypergroup productive}. 
\end{defi}

%Wildberger's method can be applied to any pointed graph $(\Gamma,v_0)$ 
%and the triplet $(H(\Gamma, v_0), \circ, *)$ can be obtained, 
%and then the pair $(\mathbb{C} H(\Gamma, v_0), \circ)$ forms 
%a not necessary associative algebra. 
%Such a triple is called a pre-hypergroup. 
%In this paper, a hermitian hypergroup (or a pre-hypergroup) $(H, \circ, *)$ is 
%simply denoted by $H$.

Actually, we have two conditions for pointed graphs with which the pointed graphs are 
expected to be hypergroup productive. 
Those conditions, named (S1) and (S2) in \cite{endo-mimu-sawa20}, 
are weaker than the distance-regularity. 
%Here we recall those conditions on a pointed graph $(\Gamma, v_0)$. 

\begin{itemize}
\item[(S1)] For every $i \in I(\Gamma, v_0)$ and $v$, $w \in V$, 
we have $\abs{S_i(v)} = \abs{S_i(w)}$. 
(In other words, the function $\abs{S_i(\cdot)}: V \to \Nonnegative$ is a constant 
for every $i \in I(\Gamma, v_0)$.)

\item[(S2)] For every $i$, $j$, $k \in I(\Gamma, v_0)$ and $v$, $w \in S_k(v_0)$, 
we have
\[
\abs{S_i(v) \cap S_j(v_0)} = \abs{S_i(w) \cap S_j(v_0)}.
\] 
(In other words, the function $\abs{S_i(\cdot) \cap S_j(v_0)}: 
S_k(v_0) \to \Nonnegative$ is a constant for every $i$, $j$, $k \in I(\Gamma, v_0)$.)
\end{itemize}

Note that every pointed graph with the condition (S1) must be self-centered. 

%When a graph $(\Gamma, v_0)$ satisfies some symmetry condition 
%which are weaker than the distance regularity, 
%each distance distribution of a random walk on $\Gamma$ starting from $v_0$ is 
%described by a product on the pre-hypergroup $H(\Gamma,v_0)$ 
%as the following theorem.

In \cite{endo-mimu-sawa20}, it was shown that algebraic computations 
in a pre-hypergroup give some probabilities related to the random walks 
on the original pointed graph under the conditions (S1) and (S2). 

%%%%%%%%%%%%%%%%%%%%%%%%%%%%%%%%%%%%%%%%%%%%%%%%%%%%%%%

\begin{theo}{\rm(\cite[Theorem 4.5]{endo-mimu-sawa20})} \label{ems}
Suppose a pointed graph $(\Gamma, v_0)$ satisfies the conditions (S1) and (S2). 
%\begin{itemize}
%\item[(S1)] we have $\abs{S_i(v)} = \mu_i$ for all vertex $v$ 
%and $i \in I(\Gamma, v_0)$, and
%\item[(S2)] the function $\abs{S_i(\cdot) \cap S_j(v_0)}$ is a constant 
%on $S_k(v_0)$ for each $i$, $j$, $k \in I(\Gamma, v_0)$.
%\end{itemize}
Let $H(\Gamma, v_0) = \set{x_i}_{i \in I(\Gamma, v_0)}$ be the pre-hypergroup 
derived from $(\Gamma, v_0)$ and $m \in \Natural$. 

Then, for $i_1$, $i_2$, $\dots$, $i_m \in I(\Gamma, v_0)$, we have 
\begin{align}
& (( \cdots ((x_{i_1} \circ x_{i_2}) \circ x_{i_3}) \circ \cdots ) \circ x_{i_{m-1}}) \circ x_{i_m}
\label{emseq} \\
=& \sum_{v_1 \in S_{i_1}(v_0)} \sum_{v_2 \in S_{i_2}(v_1)} \dots 
\sum_{v_m \in S_{i_m}(v_{m -1})} 
\frac{1}{\prod_{j =1}^{m} \abs{S_{i_j}(v_{j -1})}} x_{d(v_0, v_m)}. \nonumber
\end{align}
\end{theo}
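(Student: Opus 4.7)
The plan is to proceed by induction on $m$. The base case $m = 1$ is essentially a tautology: every $v_1 \in S_{i_1}(v_0)$ satisfies $d(v_0, v_1) = i_1$, so the right-hand side of \eqref{emseq} collapses to $x_{i_1}$ after summing the uniform coefficients $\frac{1}{\abs{S_{i_1}(v_0)}}$. For the inductive step I would write $Y_{m-1}$ for the left-hand side of \eqref{emseq} at level $m-1$ and apply the induction hypothesis; bilinearity of $\circ$ then turns $Y_m = Y_{m-1} \circ x_{i_m}$ into the $(m-1)$-fold nested sum, with each summand multiplied by the factor $x_{d(v_0, v_{m-1})} \circ x_{i_m}$. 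Hence the whole task reduces to establishing, for every choice of $v_{m-1}$ and with $k = d(v_0, v_{m-1})$, the local identity
\[
x_k \circ x_{i_m} = \frac{1}{\abs{S_{i_m}(v_{m-1})}} \sum_{v_m \in S_{i_m}(v_{m-1})} x_{d(v_0, v_m)}.
\]

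To prove this identity I would return to the defining formula \eqref{pijk} for $x_k \circ x_{i_m}$, which exhibits the coefficient of $x_l$ as an average over $v \in S_k(v_0)$ of $\abs{S_{i_m}(v) \cap S_l(v_0)} / \abs{S_{i_m}(v)}$. Condition (S1) removes the $v$-dependence of the denominator, and condition (S2) asserts that the numerator is likewise constant on $S_k(v_0)$; the average therefore collapses to a single evaluation. Since $v_{m-1}$ is itself an element of $S_k(v_0)$, we may take that evaluation at $v_{m-1}$ and then regroup the sum over $l$ as a sum over $v_m \in S_{i_m}(v_{m-1})$ stratified by $l = d(v_0, v_m)$. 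Inserting the result back into the expression for $Y_m$ yields \eqref{emseq} at level $m$, completing the induction.

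The main obstacle is precisely this replacement step: without (S2), the summand $\abs{S_{i_m}(v) \cap S_l(v_0)} / \abs{S_{i_m}(v)}$ genuinely depends on the chosen $v \in S_k(v_0)$, so there is no way to migrate the average over $S_k(v_0)$ to a sum indexed by $S_{i_m}(v_{m-1})$. The whole force of the theorem is concentrated in the role that (S2) plays in letting the newly arrived base point $v_{m-1}$ inherit the role of the generic vertex appearing in the definition of the convolution; once that identification is in place, the remainder of the argument is routine reindexing.
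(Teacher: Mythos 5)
Your proof is correct: the induction on $m$, with the key step being that (S1) and (S2) make the average defining $p_{k,i_m}^{l}$ in \eqref{pijk} collapse to a single evaluation at $v_{m-1}\in S_k(v_0)$, which then regroups as a sum over $v_m\in S_{i_m}(v_{m-1})$, is exactly the natural argument. Note that the paper itself states this result without proof, citing \cite{endo-mimu-sawa20}, so there is no internal proof to compare against; your argument is the standard one that the cited reference carries out.
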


If $H(\Gamma, v_0)$ is an associative algebra in Theorem \ref{ems}, 
one may remove all the parentheses from the left hand side of \eqref{emseq}. 
The coefficient of $x_k$ in the right hand side of \eqref{emseq} is 
the conditional probability that a random walker reaches a vertex in $S_k(v_0)$ 
%whose distance from the base point $v_0$ is $k$ 
under $m$-step jumps 
%$v_0 \xrightarrow{i_1} \cdot \xrightarrow{i_2} \cdots \xrightarrow{i_m} \cdot$. 
$v_0 \xrightarrow{i_1} v_1 \xrightarrow{i_2} \cdots \xrightarrow{i_m} v_m$. 

%%%%%%%%%%%%%%%%%%%%%%%%%%%%%%%%%%%%%%%%%%%%%%%%%%%%%%%

\begin{rema}
%Recall that any distance-regular graph with an arbitrary base point produces 
%a hermitian hypergroup. 
%Clearly, the above conditions (S1) and (S2) are weaker than the distance-regularity. 

The distance-regularity requires a graph $\Gamma$ to satisfy the following conditions. 
\begin{itemize}
\item The function 
\begin{equation*}
\abs{S_i(\cdot) \cap S_j(\cdot)}: \setcond{(v, w) \in V \times V}{d(v, w) = k} \to 
\Nonnegative 
\end{equation*} 
\end{itemize}
is a constant for every $i$, $j$, $k \in I(\Gamma, v_0)$.

By \cite[Lemma 2.3]{endo-mimu-sawa20}, any pointed graph consisting 
of a Cayley graph $G$ and the base point $e$ (the unit element of $G$) 
automatically satisfies the condition (S1). 
According to \cite[Lemma 2.4]{endo-mimu-sawa20}, 
we find that the pointed Cayley graph $(G, e)$ becomes distance-regular 
if $(G, e)$ satisfies the condition (S2). 

There can be found in \cite[Example 4.6]{endo-mimu-sawa20} 
or \cite[Subsection 4.2, Fig. 6 with top black base point]{ikka-sawa19} 
several examples of hypergroup productive pointed graphs $(\Gamma, v_0)$ 
not distance-regular but satisfying (S1) and (S2).  
All examples of such pointed graphs that have been published are hypergroup productive, 
and it is an open problem whether any pointed graphs with (S1) and (S2) are 
hypergroup productive. 
\end{rema}

%%%%%%%%%%%%%%%%%%%%%%%%%%%%%%%%%%%%%%%%%%%%%%%%%%%%%%%

For a pointed graph $(\Gamma, v_0)$ and $k \in I(\Gamma, v_0)$, a transition matrix
\begin{equation*} 
P_k = (p_{k, i}^{j})_{i, j \in I(\Gamma, v_0)} 
\end{equation*} 
is called the \textit{$k$-transition matrix} associated with $(\Gamma, v_0)$, 
where each $p_{k, i}^{j}$ is the probability as in \eqref{pijk}. 

Consider the Hilbert space $\ell^{2}(I(\Gamma, v_0))$ 
of the square summable sequences on $I(\Gamma, v_0)$. 
Then each $P_k$ can be regarded as a bounded operator 
on $\ell^{2}(I(\Gamma, v_0))$ defined by 
\begin{equation*} 
P_k(\xi) = {}^t({}^t\xi P_k) = \left( \sum_{l \in I(\Gamma, v_0)} p_{k, l}^{n} \xi_l \right)_{n \in I(\Gamma, v_0)}, 
\end{equation*} 
where $\xi = (\xi_n)_{n \in I(\Gamma, v_0)} \in \ell^{2}(I(\Gamma, v_0))$. 
An estimate of the operator norm $\norm{P_k}$ was given 
in \cite[Theorem 5.3]{endo-mimu-sawa20}. 
Also in \cite{endo-mimu-sawa20}, it was been shown for all $i$, $j \in I(\Gamma, v_0)$ 
that $P_i$ and $P_j$ commute and satisfy 
\begin{equation*} 
P_i P_j = \sum_{k \in I(\Gamma, v_0)} p_{i, j}^{k} P_k 
\end{equation*} 
when $(\Gamma, v_0)$ is hypergroup productive. 

%The following theorem should be mentioned specially 
%since it can be seen a generalization of Wildberger's observation \cite{wild94} 
%of the recursion probability of the random walks on the platonic solid graphs, 
%which is a remarkable application of the discrete hypergroups 
%produced by random walks on graphs.  

%%%%%%%%%%%%%%%%%%%%%%%%%%%%%%%%%%%%%%%%%%%%%%%%%%%%%%%%

This fact leads us to the following corollary, 
which tells us that the family $\set{P_k}_{k \in I(\Gamma, v_0)}$ 
of $k$-transition matrices describes the distance distributions of a random walk 
on $\Gamma$ starting from $v_0$. 

\begin{coro}{\rm{(\cite[Corollary 5.8]{endo-mimu-sawa20})}} \label{maincoro}
Assume that a hypergroup productive pointed graph $(\Gamma, v_0)$ 
with the conditions (S1) and (S2) produces the hermitian hypergroup 
$H(\Gamma, v_0)$. 

Then we have
\begin{equation*}
P_{i_m} \cdots P_{i_1} = \sum_{k \in I(\Gamma, v_0)} p_{i_m, \dots, i_1}^{k} P_k
\end{equation*}
for all $i_1$, $\dots$, $i_m \in I(\Gamma, v_0)$, 
where $p_{i_m, \dots, i_1}^{k}$ is the coefficient of $x_k$ in \eqref{emseq}. 
In particular, the sequence $\xi = (\delta_{n, 0})_{n \in I(\Gamma, v_0)} \in 
\ell^{2}(I(\Gamma, v_0))$ can extract $p_{i_m, \dots, k_1}^{k}$ by 
$(P_{i_m} \cdots P_{i_1} \xi)_{k} = p_{i_m, \dots, i_1}^{k}$. 
%$\xi$ は横ベクトルで, 左からかける.
\end{coro}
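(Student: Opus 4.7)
The plan is to argue by induction on $m$, using the fact (already established in \cite{endo-mimu-sawa20} and recalled just before the corollary) that the $k$-transition matrices mutually commute and satisfy $P_i P_j = \sum_{k \in I(\Gamma, v_0)} p_{i, j}^{k} P_k$ whenever $(\Gamma, v_0)$ is hypergroup productive. Since $H(\Gamma, v_0)$ is a hermitian hypergroup, the convolution $\circ$ is both associative and commutative, and therefore the iterated convolution appearing in Theorem \ref{ems} does not depend on how the parentheses are inserted nor on the order of the factors. This is the conceptual content that lets matrix multiplication mirror convolution.

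For the base case $m = 1$ there is nothing to prove, and $m = 2$ is exactly the cited identity $P_{i_2} P_{i_1} = \sum_k p_{i_2, i_1}^{k} P_k$. For the inductive step, suppose the identity holds at length $m - 1$. By associativity of $H(\Gamma, v_0)$ the coefficient $p_{i_m, \dots, i_1}^{k}$ satisfies the one-step recursion
\begin{equation*}
p_{i_m, \dots, i_1}^{k} = \sum_{l \in I(\Gamma, v_0)} p_{i_{m-1}, \dots, i_1}^{l} \, p_{i_m, l}^{k},
\end{equation*}
obtained by expanding $\bigl(x_{i_{m-1}} \circ \cdots \circ x_{i_1}\bigr) \circ x_{i_m}$ (using commutativity to put $x_{i_m}$ on the right as in \eqref{emseq}). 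On the matrix side, the induction hypothesis together with the binary relation $P_{i_m} P_l = \sum_k p_{i_m, l}^{k} P_k$ gives
\begin{equation*}
P_{i_m} P_{i_{m-1}} \cdots P_{i_1}
= P_{i_m} \sum_{l} p_{i_{m-1}, \dots, i_1}^{l} P_l
= \sum_{k} \Biggl( \sum_{l} p_{i_{m-1}, \dots, i_1}^{l} \, p_{i_m, l}^{k} \Biggr) P_k,
\end{equation*}
so the two expansions coincide and the identity is established.

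For the ``in particular'' statement, I compute the action of a single $P_k$ on $\xi = (\delta_{n, 0})_n$. From the definition \eqref{pijk}, for $v \in S_k(v_0)$ one has $S_0(v) = \{v\}$ so $|S_0(v)| = 1$ and $|S_0(v) \cap S_n(v_0)| = \delta_{k, n}$, giving $p_{k, 0}^{n} = \delta_{k, n}$. Hence $(P_k \xi)_n = \sum_{l} p_{k, l}^{n} \delta_{l, 0} = p_{k, 0}^{n} = \delta_{k, n}$, and applying the first part yields
\begin{equation*}
(P_{i_m} \cdots P_{i_1} \xi)_k = \sum_{l} p_{i_m, \dots, i_1}^{l} (P_l \xi)_k = p_{i_m, \dots, i_1}^{k},
\end{equation*}
as claimed.

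The proof is essentially bookkeeping once the two key facts are in place: the $*$-algebra homomorphism-like relation $P_i P_j = \sum_k p_{i,j}^{k} P_k$ and the associativity of $H(\Gamma, v_0)$. The only subtlety worth flagging is the ordering convention: the left-hand side $P_{i_m} \cdots P_{i_1}$ reverses the order of the indices compared with the parenthesization in \eqref{emseq}, but this is harmless because commutativity of a hermitian hypergroup (and hence of the $P_k$'s) makes both orderings yield the same expansion. I do not anticipate any serious obstacle beyond taking care with this indexing and with the boundary case where $|V|$ is infinite, in which one should verify that the sums remain finite termwise, a fact guaranteed by the fact that $x_{i_{m-1}} \circ \cdots \circ x_{i_1}$ is a finite linear combination of the $x_l$.
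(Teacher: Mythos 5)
Your proof is correct and follows essentially the route the paper intends: the corollary is quoted from \cite[Corollary 5.8]{endo-mimu-sawa20} and is presented as a direct consequence of the cited relation $P_i P_j = \sum_{k} p_{i,j}^{k} P_k$ together with commutativity, which is exactly the induction you carry out (including the correct treatment of the reversed index order and of $p_{k,0}^{n} = \delta_{k,n}$ for the ``in particular'' part). The only cosmetic remark is that the coefficient recursion for the left-parenthesized product in \eqref{emseq} needs only bilinearity of $\circ$, with commutativity (rather than associativity) used to identify $p_{l, i_m}^{k}$ with $p_{i_m, l}^{k}$.
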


%%%%%%%%%%%%%%%%%%%%%%%%%%%%%%%%%%%%%%%%%%%%%%%%%%%%%%%%%
%%%%%%%%%%%%%%%%%%%%%%%%%%%%%%%%%%%%%%%%%%%%%%%%%%%%%%%%%

\section{Adjacency matrix $A^{(k)}$ and transition matrix $P_k$} \label{secMain}

In this section, we will prove the main theorem stated in Section \ref{secIntro}. 
To prove it, we use the relationship between $A^{(k)}$ and $P_k$, 
clarified in Theorem \ref{PD=DA}. 

To begin with, we introduce the modified form of $A^{(k)}$. 
Let $\mu_k = \abs{S_k(v_0)}$ for a pointed graph $(\Gamma, v_0)$. 
Note that each vertex $v \in V$ satisfies $\mu_k = \abs{S_k(v)}$ 
under the condition (S1). 

%%%%%%%%%%%%%%%%%%%%%%%%%%%%%%%%%%%%%%%%%%%%%%%%%%%%%%%%

\begin{prop}
If a pointed graph $(\Gamma, v_0)$ satisfies the condition (S1), then 
\begin{equation*}
A_k = \frac{1}{\mu_k} A^{(k)}
\end{equation*}
is a doubly stochastic matrix.
\end{prop}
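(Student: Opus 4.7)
The plan is to verify the three defining properties of a doubly stochastic matrix (nonnegative entries, all row sums equal to $1$, all column sums equal to $1$) directly from the definition of $A^{(k)}$ together with the hypothesis (S1).

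First, I would observe that entries of $A^{(k)}$ lie in $\set{0, 1}$ by definition, and $\mu_k = \abs{S_k(v_0)} > 0$ whenever $k \in I(\Gamma, v_0)$; hence the entries of $A_k = \mu_k^{-1} A^{(k)}$ are nonnegative. Note also that local finiteness of $\Gamma$ (and an easy induction on $k$) ensures $\mu_k < \infty$, so the normalization is legitimate.

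Next, for the row sums, I would compute, for any $x \in V$,
\begin{equation*}
\sum_{y \in V} A^{(k)}_{x, y} = \sum_{y \in V} \delta_{k, d(x, y)} = \abs{S_k(x)}.
\end{equation*}
By the condition (S1) applied with $v = x$ and $w = v_0$, one has $\abs{S_k(x)} = \abs{S_k(v_0)} = \mu_k$. Dividing by $\mu_k$ shows that every row of $A_k$ sums to $1$.

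Finally, because $A^{(k)}$ is symmetric (the relation $d(x,y) = k$ is symmetric in $x$ and $y$), each column of $A^{(k)}$ is a row, so the column sums coincide with the row sums and are likewise equal to $\mu_k$. Hence every column of $A_k$ also sums to $1$, and $A_k$ is doubly stochastic. There is no real obstacle here; the only subtlety to flag is that in the infinite-vertex case the matrix is indexed by the countable set $V$, so ``doubly stochastic'' is understood in the sense that every row and every column is an $\ell^1$ sequence summing to $1$, which is precisely what the computation above establishes.
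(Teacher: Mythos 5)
Your proof is correct and follows essentially the same route as the paper: a direct computation of the row and column sums of $A_k$ from the definition $A^{(k)} = (\delta_{k, d(x,y)})_{x,y \in V}$ together with (S1), which gives $\abs{S_k(x)} = \mu_k$ for every vertex $x$. Deducing the column sums from the symmetry of $A^{(k)}$ rather than computing them separately, and your remarks on nonnegativity, finiteness of $\mu_k$, and the infinite-vertex case, are only minor refinements of the paper's argument.
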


\begin{proof}
%Let $V$ be the vertex set of $\Gamma$. 
%The condition (S1) implies that $\mu_k = \abs{S_k(v)}$ for all vertex $v \in V$, 
Recall that we have defined the $k$-adjacency matrix $A^{(k)}$ 
associated with $\Gamma$ as $A^{(k)} = (\delta_{k, d(x, y)})_{x, y \in V}$. 
Hence, representing the entries of $A_k$ as $A_k = (A_k(x, y))_{x, y \in V}$, we have 
\begin{equation*}
\sum_{x \in V} A_k(x, y) = 
\frac{1}{\mu_k} \sum_{x \in V} \delta_{k, d(x, y)} = 
\frac{1}{\mu_k} \abs{S_k(y)} = 
1 
\end{equation*}
and 
\begin{equation*} 
\sum_{y \in V} A_k(x, y) = 
\frac{1}{\mu_k} \sum_{y \in V} \delta_{k, d(x, y)} = 
\frac{1}{\mu_k} \abs{S_k(x)} = 
1 
\end{equation*} 
for all $x$, $y \in V$. 
This means that $A_k$ is a doubly stochastic matrix. 
\end{proof}

%%%%%%%%%%%%%%%%%%%%%%%%%%%%%%%%%%%%%%%%%%%%%%%%%%%%%%%%

Here we define an important mapping which connects $A_k$ and $P_k$. 

\begin{defi}
Let $\Gamma$ be a graph with the vertex set $V$ and $v_0 \in V$ the base point. 
We define the map $D: \mathbb{C}^{V} \to \mathbb{C}^{I(\Gamma, v_0)}$ by 
\begin{equation*}
D(\bm{s}) = 
\left( \sum_{v \in S_i(v_0)} s_v \right)_{i \in I(\Gamma, v_0)}
\end{equation*}
for each $\bm{s} = (s_v)_{v \in V} \in \mathbb{C}^{V}$.
\end{defi}

%%%%%%%%%%%%%%%%%%%%%%%%%%%%%%%%%%%%%%%%%%%%%%%%%%%%%%%%%
Note that if $|V|=\infty$ the set $\mathbb{C}^V$ is the vector space consisting of all sequences with the infinite length $|V|=\infty$ indexed by $V$.
The following fact can be easily seen, but we are going to use it 
to obtain the main theorem. 

\begin{prop}
The map $D: \mathbb{C}^{V} \to \mathbb{C}^{I(\Gamma, v_0)}$ is 
a surjective linear map.
\end{prop}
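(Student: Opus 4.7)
The plan is to verify the two assertions separately, with almost all the work being bookkeeping.

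For linearity, I would simply observe that for any $\bm{s}, \bm{t} \in \mathbb{C}^{V}$ and $\alpha, \beta \in \mathbb{C}$, the $i$-th coordinate of $D(\alpha \bm{s} + \beta \bm{t})$ is
\[
\sum_{v \in S_i(v_0)} (\alpha s_v + \beta t_v) = \alpha \sum_{v \in S_i(v_0)} s_v + \beta \sum_{v \in S_i(v_0)} t_v,
\]
which is the $i$-th coordinate of $\alpha D(\bm{s}) + \beta D(\bm{t})$. Each of these sums is a finite sum because $\Gamma$ is locally finite and connected, so every sphere $S_i(v_0)$ is finite; this is also what ensures $D$ is well-defined in the first place.

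For surjectivity, I would produce an explicit right inverse. Given a target $\bm{a} = (a_i)_{i \in I(\Gamma,v_0)} \in \mathbb{C}^{I(\Gamma,v_0)}$, the definition of $I(\Gamma,v_0)$ guarantees that $S_i(v_0) \neq \varnothing$ for every $i$ in that set, so I may choose once and for all a representative $v_i \in S_i(v_0)$. I would then define $\bm{s} = (s_v)_{v \in V} \in \mathbb{C}^{V}$ by $s_{v_i} = a_i$ and $s_v = 0$ for all other $v \in V$. Since the spheres $S_i(v_0)$ partition $V$, each $S_i(v_0)$ contains exactly one chosen representative $v_i$, and hence $\sum_{v \in S_i(v_0)} s_v = a_i$, giving $D(\bm{s}) = \bm{a}$. (Alternatively, one could spread the mass uniformly and take $s_v = a_{d(v_0,v)}/\mu_{d(v_0,v)}$, which avoids a choice but needs finiteness of the spheres; this finiteness is free from local finiteness, so either construction works.)

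There is no real obstacle here; the only small point to flag is that, because we are in the infinite case $\mathbb{C}^{V}$ is the full space of sequences with no summability condition, so the preimage $\bm{s}$ constructed above automatically lies in $\mathbb{C}^{V}$ regardless of how the values $a_i$ behave in $i$.
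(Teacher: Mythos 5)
Your proof is correct and follows essentially the same route as the paper: linearity is checked coordinate-wise, and surjectivity is established by choosing one representative $v_i$ in each sphere $S_i(v_0)$ and placing the target value there, exactly as in the paper's own argument. The extra remarks on local finiteness and the alternative uniform construction are fine but not needed.
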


\begin{proof}
It is obvious that $D$ is a linear map. 

To prove the surjectivity of $D$, let an arbitrary $\bm{t} = (t_i)_{i \in I(\Gamma, v_0)} \in 
\mathbb{C}^{I(\Gamma, v_0)}$ be given. 
Take an arbitrary vertex $v_i$ from $S_i(v_0)$ for each $i \in I(\Gamma, v_0)$ and  
define $\bm{s} = (s_v)_{v \in V} \in \mathbb{C}^{V}$ by 
\begin{equation*}
s_v = \begin{cases}
t_i & (v = v_i \ \text{for some} \ i \in I(\Gamma, v_0)), \\
0 & (v \neq v_i \ \text{for any} \ i \in I(\Gamma, v_0)).
\end{cases}
\end{equation*}
Then we have
\begin{equation*} 
D(\bm{s}) = 
\left( \sum_{v \in S_i(v_0)} s_v \right)_{i \in I(\Gamma, v_0)} = 
(t_i)_{i \in I(\Gamma, v_0)} = 
\bm{t}. 
\end{equation*} 
Hence $D$ is a surjection. 
\end{proof}

%%%%%%%%%%%%%%%%%%%%%%%%%%%%%%%%%%%%%%%%%%%%%%%%%%%%%%%%%

Next, we make an observation of $p_{i, j}^{k}$'s, 
the entries of the transition matrices associated with $(\Gamma, v_0)$. 

\begin{lemm} \label{commlemm} 
Suppose that a pointed graph $(\Gamma, v_0)$ satisfies the conditions (S1) and (S2). 
Then, for all $i$, $j$, $k \in I(\Gamma, v_0)$ and $z \in S_k(v_0)$, we have 
\begin{equation} \label{pijklemm}
p_{i, j}^k = \frac{\mu_k \abs{S_{j}(z) \cap S_{i}(v_0)}}{\mu_i \mu_j},
\end{equation}
where $p_{i, j}^k$ is defined in \eqref{pijk}. 
%これ以降の主張を変更. 
Thus we have $p_{i, j}^{k} = p_{j, i}^{k}$ if and only if 
\begin{equation*} 
\abs{S_i(z) \cap S_j(v_0)} = \abs{S_j(z') \cap S_i(v_0)} 
\end{equation*}
holds for some $z$, $z' \in S_k(v_0)$. 
\end{lemm}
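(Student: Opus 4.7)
The plan is to unfold the definition \eqref{pijk} of $p_{i,j}^{k}$, apply the hypothesis (S1) to clear out the denominator $|S_{j}(v)|$, and then perform a double counting (swap of summation order) to recast the remaining sum over $v\in S_{i}(v_0)$ as a sum over $w\in S_{k}(v_0)$; at that point (S2) makes the summand constant and the formula \eqref{pijklemm} drops out.

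First I would write
\[
p_{i, j}^{k} = \frac{1}{\mu_i}\sum_{v \in S_{i}(v_0)} \frac{|S_{j}(v) \cap S_{k}(v_0)|}{|S_{j}(v)|}
= \frac{1}{\mu_i \mu_j}\sum_{v \in S_{i}(v_0)} |S_{j}(v) \cap S_{k}(v_0)|,
\]
using (S1) to replace $|S_{i}(v_0)|$ and $|S_{j}(v)|$ by $\mu_i$ and $\mu_j$ respectively. Next I would reinterpret the remaining sum as a count of ordered pairs $(v,w)\in V\times V$ subject to $d(v_0,v)=i$, $d(v,w)=j$, $d(v_0,w)=k$, and regroup according to $w$:
\[
\sum_{v \in S_{i}(v_0)} |S_{j}(v) \cap S_{k}(v_0)|
= \sum_{w \in S_{k}(v_0)} |S_{j}(w) \cap S_{i}(v_0)|,
\]
where I have used that $d(v,w)=j$ is symmetric in $v$ and $w$. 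By (S2) applied with the roles $(i,j,k)$ replaced by $(j,i,k)$, the function $w \mapsto |S_{j}(w) \cap S_{i}(v_0)|$ is constant on $S_{k}(v_0)$, so for any fixed $z \in S_{k}(v_0)$ the sum evaluates to $\mu_k |S_{j}(z) \cap S_{i}(v_0)|$. Substituting gives \eqref{pijklemm}.

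For the second assertion, I would simply apply \eqref{pijklemm} to both $p_{i,j}^{k}$ and $p_{j,i}^{k}$ and cancel the common factor $\mu_k/(\mu_i \mu_j)$, yielding
\[
p_{i, j}^{k} = p_{j, i}^{k}
\ \Longleftrightarrow\
|S_{j}(z) \cap S_{i}(v_0)| = |S_{i}(z') \cap S_{j}(v_0)|
\]
for any (and hence, by (S2), for some) $z, z' \in S_{k}(v_0)$. Relabeling $z \leftrightarrow z'$ gives the form stated in the lemma.

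There is no real obstacle here: the only subtlety is the bookkeeping in the swap of summation, and being careful that (S2) is invoked in the correct combination of arguments so that the quantity $|S_{j}(w)\cap S_{i}(v_0)|$ really is constant as $w$ ranges over $S_{k}(v_0)$. All other steps are algebraic manipulation.
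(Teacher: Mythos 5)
Your proof is correct and follows essentially the same route as the paper: unfold the definition of $p_{i,j}^{k}$, use (S1) to replace $\abs{S_i(v_0)}$ and $\abs{S_j(v)}$ by $\mu_i$ and $\mu_j$, then double-count the pairs $(v,w)\in S_i(v_0)\times S_k(v_0)$ with $d(v,w)=j$ and invoke (S2) to make the summand constant over $S_k(v_0)$. The only difference is that you spell out the (immediate) derivation of the second assertion, which the paper leaves implicit.
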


\begin{proof}
The condition (S2) has us find that, for an arbitrary $z \in S_k(v_0)$, 
\begin{align*}
&\sum_{v \in S_i(v_0)} \abs{S_j(v) \cap S_k(v_0)} = \abs{\setcond{(v, w) \in S_i(v_0) \times S_k(v_0)}{d(v, w) = j}} \\
=& \abs{\setcond{(w, v) \in S_k(v_0) \times S_i(v_0)}{d(w, v) = j}} = \sum_{w \in S_k(v_0)} \abs{S_j(w) \cap S_i(v_0)} \\
=& \mu_k \abs{S_j(z) \cap S_i(v_0)}. 
\end{align*} 
This computation and the conditions (S1) and (S2) yield the equations 
\begin{align*} 
&p_{i, j}^{k} = 
\frac{1}{\abs{S_i(v_0)}} \sum_{v \in S_i(v_0)} \frac{\abs{S_j(v) \cap S_k(v_0)}}{\abs{S_j(v)}} = 
\frac{1}{\mu_i \mu_j} \sum_{v \in S_i(v_0)} \abs{S_j(v) \cap S_k(v_0)} \\
=& \frac{\mu_k \abs{S_j(z) \cap S_i(v_0)}}{\mu_i \mu_j}.
\end{align*} 
%The second assrtion ... 
This completes the proof.
\end{proof}

%%%%%%%%%%%%%%%%%%%%%%%%%%%%%%%%%%%%%%%%%%%%%%%%%%%%%%%%%

\if0
\begin{theo}
Let $(\Gamma, v_0)$ be a pointed graph satisfying the conditions (S1) and (S2). 
and $p_{i,j}^k=p_{j,i}^k$ for all $i,j,k\in
I(\Gamma,v_0)$. Then we have $P_kD=DA_k$ for all $k\in
I(\Gamma,v_0)$.
\end{theo}
\begin{proof}
Let $\{{\bf
e}_v\}_{v\in
V}$ and $\{{\bf
f}_i\}_{i\in
I(\Gamma,v_0)}$ be the standard bases of $\mathbb{C}^V$ and $\mathbb{C}^{I(\Gamma,v_0)}$, respectively. By the linearity of $D$, it is enough to show that $P_kD{\bf
e}_x=DA_k{\bf
e}_x$ for each $x\in
V$. We can calculate as follows:
\begin{align}
&\label{DAe}DA_k{\bf
e}_x=\frac{1}{\mu_k}DA^{(k)}{\bf
e}_x=\frac{1}{\mu_k}D\left(\sum_{w\in
V}\delta_{k,d(v,w)}\delta_{w,x}\right)_{v\in
V}\\
&=\frac{1}{\mu_k}D\left(\sum_{w\in
S_k(v)}\delta_{w,x}\right)_{v\in
V}=\frac{1}{\mu_k}D\left(\sum_{v\in
S_k(x)}{\bf
e}_v\right)\nonumber\\
&=\frac{1}{\mu_k}\sum_{v\in
S_k(x)}D{\bf
e}_v=\frac{1}{\mu_k}\sum_{v\in
S_k(x)}\left(\sum_{y\in
S_i(v_0)}\delta_{y,v}\right)_{i\in
I(\Gamma,v_0)}\nonumber\\
&=\frac{1}{\mu_k}\sum_{v\in
S_k(x)}{\bf
f}_{d(v_0,v)}=\frac{1}{\mu_k}\left(|S_k(x)\cap
S_i(v_0)|\right)_{i\in
I(\Gamma,v_0)}\nonumber\\
&=\left(\frac{|S_k(x)\cap
S_i(v_0)|}{\mu_k}\right)_{i\in
I(\Gamma,v_0)},\nonumber
\end{align}
where the forth and eighth equalities in the above are implied from the self-centeredness and counting $v\in
S_i(v_0)$ such that $d(x,v)=k$ for each $i\in
I(\Gamma,v_0)$, respectively.

On the other hand, by $(S1)$ and $(S2)$, we have
\begin{align}
&\label{PDe}P_kD{\bf
e}_x=P_k\left(\sum_{y\in
S_i(v_0)}\delta_{x,y}\right)_{i\in
I(\Gamma,v_0)}=P_k{\bf
f}_{d(v_0,x)}\\
&=\left(p_{k,d(v_0,x)}^i\right)_{i\in
I(\Gamma,v_0)}=\left(\frac{|S_{d(v_0,x)}(w)\cap
S_i(v_0)|}{\mu_{d(v_0,x)}}\right)_{i\in
I(\Gamma,v_0)}\nonumber
\end{align}
for an arbitrary $w\in
S_k(v_0)$. We shall show that
\begin{equation}\label{comm'}
\mu_{d(v_0,x)}|S_k(x)\cap
S_i(v_0)|=\mu_k|S_{d(v_0,x)}(w)\cap
S_i(v_0)|.
\end{equation}
As with the proof of Lemma \ref{commlemm}, for an arbitrary $v\in
S_i(v_0)$, we have
\begin{align*}
&\mu_{d(v_0,x)}|S_k(x)\cap
S_i(v_0)|=\sum_{y\in
S_{d(v_0,x)}(v_0)}|S_k(y)\cap
S_i(v_0)|\\
&=|\{(y,z)\in
S_{d(v_0,x)}(v_0)\times
S_i(v_0)\mid
d(y,z)=k\}|\\
&=|\{(z,y)\in
S_i(v_0)\times
S_{d(v_0,x)}(v_0)\mid
d(z,y)=k\}|\\
&=\sum_{z\in
S_i(v_0)}|S_k(z)\cap
S_{d(v_0,x)}(v_0)|=\mu_i|S_k(v)\cap
S_{d(v_0,x)}(v_0)|,\\
&\mu_k|S_{d(v_0,x)}(w)\cap
S_i(v_0)|=\sum_{v\in
S_k(v_0)}|S_{d(v_0,x)}(v)\cap
S_i(v_0)|\\
&=\{(y,z)\in
S_k(v_0)\times
S_i(v_0)\mid
d(y,z)=d(v_0,x)\}\\
&=\{(z,y)\in
S_i(v_0)\times
S_k(v_0)\mid
d(z,y)=d(v_0,x)\}\\
&=\sum_{z\in
S_i(v_0)}|S_{d(v_0,x)}(z)\cap
S_k(v_0)|=\mu_i|S_{d(v_0,x)}(v)\cap
S_k(v_0)|
\end{align*}
and hence the assumption of the commutativity and Lemma \ref{commlemm} imply the equation \eqref{comm'}. By the equations \eqref{DAe}, \eqref{PDe} and \eqref{comm'}, we have shown that $DA_k{\bf
e}_x=P_kD{\bf
e}_x$.
\end{proof}
\fi

We now stand at the stage of giving the relationship between $A_k$ and $P_k$. 
The following theorem provides a criterion for the determination 
of the hypergroup productivity. 

\begin{theo} \label{PD=DA}
Suppose that a pointed graph $(\Gamma, v_0)$ satisfies the conditions 
(S1) and (S2). 
%and $P_k$ the $k$-transition matrix associated with $(\Gamma, v_0)$ 
%for each $k \in I(\Gamma, v_0)$. 
Then we have the commutativity of the pre-hypergroup $H(\Gamma, v_0)$ 
%that is, $p_{i, j}^{k} = p_{j, i}^{k}$ for all $i$, $j$, $k \in I(\Gamma, v_0)$ 
if and only if $P_h D = D A_h$ holds for every $h \in I(\Gamma, v_0)$. 
\end{theo}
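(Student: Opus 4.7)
The plan is to prove both implications simultaneously by evaluating $P_h D$ and $D A_h$ on the standard basis $\{\mathbf{e}_x\}_{x \in V}$ of $\mathbb{C}^V$, comparing coordinates in the standard basis $\{\mathbf{f}_i\}_{i \in I(\Gamma,v_0)}$ of $\mathbb{C}^{I(\Gamma,v_0)}$, and matching the resulting identity to the criterion given by Lemma \ref{commlemm}. The (partial) commented-out proof in the excerpt indicates the same line of attack; I will organize it so that the chain of equivalences runs in both directions.

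First I would compute $D A_h \mathbf{e}_x$. Since $A^{(h)} \mathbf{e}_x = \sum_{v \in S_h(x)} \mathbf{e}_v$ and $D \mathbf{e}_v = \mathbf{f}_{d(v_0, v)}$, grouping the basis vectors $\mathbf{f}_i$ gives
\[
D A_h \mathbf{e}_x = \frac{1}{\mu_h}\Bigl(\abs{S_h(x) \cap S_i(v_0)}\Bigr)_{i \in I(\Gamma,v_0)}.
\]
Next I would compute $P_h D \mathbf{e}_x = P_h \mathbf{f}_{d(v_0,x)}$. Using the entries of the transition matrix and Lemma \ref{commlemm} applied with $z \in S_i(v_0)$ arbitrary, this is
\[
P_h D \mathbf{e}_x = \Bigl(p_{h, d(v_0,x)}^{i}\Bigr)_{i \in I(\Gamma,v_0)} = \Bigl(\frac{\mu_i \abs{S_{d(v_0,x)}(z) \cap S_h(v_0)}}{\mu_h \mu_{d(v_0,x)}}\Bigr)_{i \in I(\Gamma,v_0)}.
\]

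Now I would bring the two expressions to a common denominator and observe that equality coordinate-by-coordinate is equivalent to
\[
\mu_{d(v_0,x)} \abs{S_h(x) \cap S_i(v_0)} = \mu_i \abs{S_{d(v_0,x)}(z) \cap S_h(v_0)} \qquad (z \in S_i(v_0)).
\]
A double-counting of the set $\{(y,z) \in S_{d(v_0,x)}(v_0) \times S_i(v_0) \mid d(y,z)=h\}$, using only (S2) exactly as in the proof of Lemma \ref{commlemm}, yields the unconditional identity
\[
\mu_{d(v_0,x)} \abs{S_h(x) \cap S_i(v_0)} = \mu_i \abs{S_h(z) \cap S_{d(v_0,x)}(v_0)} \qquad (z \in S_i(v_0)).
\]
Combining these, the required equality reduces to
\[
\abs{S_h(z) \cap S_{d(v_0,x)}(v_0)} = \abs{S_{d(v_0,x)}(z) \cap S_h(v_0)} \qquad (z \in S_i(v_0)),
\]
which by Lemma \ref{commlemm} is precisely the condition $p_{h, d(v_0,x)}^{i} = p_{d(v_0,x), h}^{i}$.

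Finally I would note that as $x$ ranges over $V$, the value $d(v_0,x)$ ranges over all of $I(\Gamma,v_0)$, and $i$ is arbitrary, so requiring $P_h D \mathbf{e}_x = D A_h \mathbf{e}_x$ for every $x \in V$ and every $h \in I(\Gamma,v_0)$ is equivalent to $p_{h,j}^{i} = p_{j,h}^{i}$ for every $h$, $i$, $j \in I(\Gamma,v_0)$, i.e.\ to commutativity of $\circ$. The main obstacle is the bookkeeping: making sure the double-counting identity above is derived from (S2) alone (so it is available for both directions), and being careful that the ``for some $z$'' clause in Lemma \ref{commlemm} can be promoted to ``for all $z$'' via (S2), so the equivalence genuinely runs both ways rather than only in one direction.
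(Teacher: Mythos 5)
Your proposal is correct and follows essentially the same route as the paper's proof: the same evaluations of $D A_h \bm{e}_x$ and $P_h D \bm{e}_x$ on standard basis vectors, the same double-counting identity as in the proof of Lemma \ref{commlemm}, and the same appeal to that lemma to translate the coordinate equality into $p_{h,j}^{i} = p_{j,h}^{i}$. The only difference is organizational: you isolate the (S2)-only identity so both implications fall out of a single chain of equivalences, whereas the paper proves the two directions separately, and your observation that (S2) makes the ``for some $z$'' clause in Lemma \ref{commlemm} equivalent to ``for all $z$'' correctly handles the one point where care is needed.
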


\begin{proof}
First, we assume that $H(\Gamma, v_0)$ is commutative. 
Then, $p_{i, j}^{k} = p_{j, i}^{k}$ holds for every $i$, $j$, $k \in I(\Gamma, v_0)$. 

Let $\set{\bm{e}_v}_{v \in V}$ and $\set{\bm{f}_i}_{i \in I(\Gamma, v_0)}$ be 
the standard bases of $\mathbb{C}^V$ and $\mathbb{C}^{I(\Gamma, v_0)}$,  respectively. 
What we have to show is that $P_k D \bm{e}_v = D A_k \bm{e}_v$ holds 
for every $v \in V$. 

Fixing $k \in I(\Gamma, v_0)$ and $v \in V$ arbitrarily, 
we can calculate $D A_k \bm{e}_v$ as follows. 
\begin{align}
D A_k \bm{e}_v =& 
\frac{1}{\mu_k} D A^{(k)} \bm{e}_v 
%= \frac{1}{\mu_k} D \left( \sum_{w \in V} \delta_{k, d(v,w)} \delta_{w,x} \right)_{v \in V} 
= \frac{1}{\mu_k} D \left( \delta_{k, d(x, v)} \right)_{x \in V}  
%=& \frac{1}{\mu_k} D \left( \sum_{w \in S_k(v)} \delta_{w, x} \right)_{v \in V} 
%= \frac{1}{\mu_k} D \left( \sum_{v \in S_k(x)} \bm{e}_v \right) \nonumber \\ 
%=& \frac{1}{\mu_k} \sum_{v \in S_k(x)} D \bm{e}_v 
%= \frac{1}{\mu_k} \sum_{v \in S_k(x)} 
%\left( \sum_{y \in S_i(v_0)} \delta_{y, v} \right)_{i \in I(\Gamma, v_0)} \nonumber \\ 
%=& \frac{1}{\mu_k} \sum_{v \in S_k(x)} \bm{f}_{d(v_0, v)} 
= \frac{1}{\mu_k} 
\left( \sum_{x \in S_i(v_0)} \delta_{k, d(x, v)} \right)_{i \in I(\Gamma, v_0)} \label{DAe} \\ 
=& \frac{1}{\mu_k} \left( \abs{S_k(v) \cap S_i(v_0)} \right)_{i \in I(\Gamma, v_0)} 
= \left( \frac{\abs{S_k(v) \cap S_i(v_0)}}{\mu_k} \right)_{i \in I(\Gamma, v_0)}. \nonumber 
\end{align} 
%where the fourth equality in the above is implied 
%from the counting $v \in S_i(v_0)$ such that $d(x, v) = k$ 
%for each $i \in I(\Gamma, v_0)$.

On the other hand, we can calculate $P_k D \bm{e}_v$ as follows. 
\begin{align}
&P_k D \bm{e}_v 
= P_k \left( \sum_{w \in S_i(v_0)} \delta_{v, w} \right)_{i \in I(\Gamma, v_0)} 
= P_k \bm{f}_{d(v_0, v)} \label{PDe} \\
=& \left( p_{k, d(v_0, v)}^{i} \right)_{i \in I(\Gamma, v_0)} 
= \left( \frac{\mu_i \abs{S_{d(v_0, v)}(w_i) \cap S_k(v_0)}}{\mu_k \mu_{d(v_0, v)}} 
\right)_{i \in I(\Gamma,v_0)}, \nonumber 
\end{align}
where each $w_i \in S_i(v_0)$ is arbitrarily taken. 
We have used \eqref{pijklemm} to obtain the fourth equality. 

It remains to show the equation 
\begin{equation} \label{eqRemain}
\frac{\abs{S_k(v) \cap S_i(v_0)}}{\mu_k} = 
\frac{\mu_i \abs{S_{d(v_0, v)}(w_i) \cap S_k(v_0)}}{\mu_k \mu_{d(v_0, v)}} 
\end{equation} 
for every $i \in I(\Gamma, v_0)$. 
As with the proof of Lemma \ref{commlemm}, we have 
\begin{align} 
& \mu_{d(v_0, v)} \abs{S_k(v) \cap S_i(v_0)}  = \sum_{y \in S_{d(v_0, v)}(v_0)} \abs{S_k(y) \cap S_i(v_0)} \label{1} \\ 
=& \abs{\setcond{(y, z) \in S_{d(v_0, v)}(v_0) \times S_i(v_0)}{d(y, z) = k}} \nonumber \\ 
=& \abs{\setcond{(z, y) \in S_i(v_0) \times S_{d(v_0, v)}(v_0)}{d(z, y) = k}} \nonumber \\ 
=& \sum_{z \in S_i(v_0)} \abs{S_k(z) \cap S_{d(v_0, v)}(v_0)} = \mu_i \abs{S_k(w_i) \cap S_{d(v_0, v)}(v_0)} \nonumber \\ 
=& \mu_i \abs{S_{d(v_0, v)}(w_i) \cap S_k(v_0)}. \nonumber  
%& \mu_k \abs{S_{d(v_0, v)}(w) \cap S_i(v_0)} 
%= \sum_{v \in S_k(v_0)} \abs{S_{d(v_0, v)}(v) \cap S_i(v_0)} \\ 
%=& \abs{\setcond{(y, z) \in S_k(v_0) \times S_i(v_0)}{d(y, z) = d(v_0, v)}} \nonumber \\ 
%=& \abs{\setcond{(z, y) \in S_i(v_0) \times S_k(v_0)}{d(z, y) = d(v_0, v)}} \nonumber \\ 
%=& \sum_{z \in S_i(v_0)} \abs{S_{d(v_0, v)}(z) \cap S_k(v_0)} 
%= \mu_i \abs{S_{d(v_0, v)}(v) \cap S_k(v_0)} \nonumber
\end{align}
The last equality follows the assumption $p_{k, d(v_0, v)}^{i} = p_{d(v_0, v), k}^{i}$ and 
Lemma \ref{commlemm}. 
These computations ensure the equation \eqref{eqRemain} 
and we obtain that $P_k D \bm{e}_v = D A_k \bm{e}_v$. 

%Thus, if the pre-hypergroup $H(\Gamma, v_0)$ is commutative, 
%then Lemma \ref{commlemm} implies that
%\begin{equation} \label{comm'}
%\mu_{d(v_0, x)} \abs{S_k(x) \cap S_i(v_0)} = 
%\mu_k \abs{S_{d(v_0, x)}(w) \cap S_i(v_0)}. 
%\end{equation} 
%By the equations \eqref{DAe}, \eqref{PDe} and \eqref{comm'}, 
%we have shown that $D A_k \bm{e}_x = P_k D \bm{e}_x$, 
%and hence $D A_k = P_k D$ holds. 

Conversely, assume that $P_h D = D A_h$ holds for every $h \in I(\Gamma, v_0)$. 
Fixing arbitrary $i$, $j$, $k \in I(\Gamma, v_0)$, 
and we should show the equality $p_{i, j}^{k} = p_{j, i}^{k}$. 

%Recall the equation \eqref{pijklemm} in Lemma \ref{commlemm}, and we have 
%\begin{gather*} 
%p_{i, j}^{k} = \frac{\mu_k \abs{S_j(x) \cap S_i(v_0)}}{\mu_i \mu_j} \\ 
%\intertext{and} 
%p_{j, i}^{k} = \frac{\mu_k \abs{S_i(x) \cap S_j(v_0)}}{\mu_i \mu_j}. 
%\end{gather*} 
%Here, we let $x \in S_k(v_0)$. 

%To obtain the expected equality $p_{i, j}^{k} = p_{j, i}^{k}$, 
%we should show that $\abs{S_j(x) \cap S_i(v_0)} = \abs{S_i(x) \cap S_j(v_0)}$. 

Under the assumption $P_j D = D A_j$, 
the above calculations \eqref{DAe} and \eqref{PDe} imply that 
\begin{align} 
&\frac{\abs{S_j(v) \cap S_i(v_0)}}{\mu_j} 
= (D A_j \bm{e}_v)_{i} 
= (P_j D \bm{e}_v)_{i} 
= p_{j, d(v_0, v)}^{i} \nonumber \\ 
=& \frac{1}{\abs{S_j(v_0)}} \sum_{w \in S_j(v_0)} 
\frac{\abs{S_{d(v_0, v)}(w) \cap S_i(v_0)}}{\abs{S_{d(v_0, v)}(w)}} 
= \frac{\abs{S_{d(v_0, v)}(z) \cap S_i(v_0)}}{\mu_{d(v_0, v)}}\nonumber
%\frac{\abs{S_{d(v_0, x)}(w) \cap S_i(v_0)}}{\mu_{d(v_0, x)}} 
\end{align} 
for all $v \in V$ and $z \in S_j(v_0)$. 
If $v$ belongs to $S_k(v_0)$, then we have 
\if0
\begin{align*}
&\frac{\mu_i|S_k(v)\cap
S_j(v_0)|}{\mu_k\mu_j}=\frac{\mu_i|S_k(v)\cap
S_{d(v_0,x)}(v_0)|}{\mu_k\mu_{d(v_0,x)}}=\frac{|S_k(x)\cap
S_i(v_0)|}{\mu_k}\\
&=\frac{|S_j(w)\cap
S_i(v_0)|}{\mu_j}=\frac{|S_{d(v_0,x)}(w)\cap
S_i(v_0)|}{\mu_j}=\frac{\mu_i|S_{d(v_0,x)}(v)\cap
S_k(v_0)|}{\mu_j\mu_k}\\
&=\frac{\mu_i|S_j(v)\cap
S_k(v_0)|}{\mu_j\mu_k},
\end{align*}
\fi
\begin{align*}
& \abs{S_j(v) \cap S_i(v_0)} 
%= \abs{S_j(v) \cap S_{d(v_0, v)}(v_0)} 
= \frac{\mu_j \abs{S_{d(v_0, v)}(z) \cap S_i(v_0)}}{\mu_{d(v_0, v)}} = \frac{\mu_j \abs{S_k(z) \cap S_i(v_0)}}{\mu_k} \\
=& \frac{\mu_i \abs{S_j(w_i) \cap S_k(v_0)}}{\mu_k} 
= \frac{\mu_k \abs{S_i(v) \cap S_j(v_0)}}{\mu_k} = \abs{S_i(v) \cap S_j(v_0)}, 
%= \frac{\mu_{d(v_0,x)} \abs{S_k(x) \cap S_i(v_0)}}{\mu_i} \\ 
%=& \frac{\mu_k \abs{S_{d(v_0, x)}(w) \cap S_i(v_0)}}{\mu_i} 
%= \abs{S_{d(v_0, x)}(v) \cap S_k(v_0)} 
%= \abs{S_j(v) \cap S_k(v_0)} 
\end{align*} 
where $w_i \in S_i(v_0)$ is arbitrarily taken. 
The second equality derives from $d(v_0, v) = k$. 
The third and fourth equalities can be obtained by the calculations similar to \eqref{1}. 
%by the equations \eqref{1} and \eqref{2} and taking $x \in S_j(v_0)$. 
Appealing to the second assertion of Lemma \ref{commlemm}, 
we complete the proof of $p_{i, j}^{k} = p_{j, i}^{k}$. 
\end{proof}

%%%%%%%%%%%%%%%%%%%%%%%%%%%%%%%%%%%%%%%%%%%%%%%%%%%%%%%%%

Of course we have the commutativity of $H(\Gamma, v_0)$ 
when $(\Gamma, v_0)$ is hypergroup productive. 
It is known that $(\Gamma, v_0)$ must be hypergroup productive 
if $\Gamma$ is distance-regular, 
so we obtain the following corollary. 

\begin{coro}
If $\Gamma$ is a distance-regular graph, 
for an arbitrary base point $v_0$, 
we have $P_k D = D A_k$ for all $k \in I(\Gamma, v_0)$. 
\end{coro}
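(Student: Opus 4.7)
The plan is to derive this corollary as a short deduction from Theorem \ref{PD=DA}, so the task reduces to verifying that the hypotheses of that theorem hold and that the pre-hypergroup $H(\Gamma, v_0)$ is commutative whenever $\Gamma$ is distance-regular.

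First I would confirm that distance-regularity forces both (S1) and (S2). For (S2), the characterisation recalled in the remark after Theorem \ref{ems} says that $\abs{S_i(\cdot) \cap S_j(\cdot)}$ depends only on the distance between its two arguments, so fixing one argument to $v_0$ yields exactly (S2). For (S1), specialising the same characterisation to the pair $(v, v)$ at distance $0$ with $i = j$ shows that $\abs{S_i(v)} = \abs{S_i(v) \cap S_i(v)}$ is independent of $v$.

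Next, as noted in the paragraph preceding the corollary, the result of \cite{ikka-sawa19} tells us that $(\Gamma, v_0)$ is hypergroup productive whenever $\Gamma$ is distance-regular. Thus $H(\Gamma, v_0)$ is a hermitian hypergroup, and since hermitian hypergroups are automatically commutative (as remarked right after Definition \ref{defDHG}), $H(\Gamma, v_0)$ is in particular a commutative pre-hypergroup. I would then invoke the forward implication of Theorem \ref{PD=DA}: the commutativity of $H(\Gamma, v_0)$ together with (S1) and (S2) yields $P_k D = D A_k$ for every $k \in I(\Gamma, v_0)$, which is the claimed conclusion. The only point that needs a moment of thought is the passage from distance-regularity to (S1) and (S2), but this is essentially an unpacking of definitions rather than a genuine obstacle.
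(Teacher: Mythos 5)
Your argument is correct and follows essentially the same route as the paper: distance-regularity gives hypergroup productivity (hence commutativity of $H(\Gamma, v_0)$) via \cite{ikka-sawa19}, and the forward direction of Theorem \ref{PD=DA} then yields $P_k D = D A_k$. Your explicit check that distance-regularity implies (S1) and (S2) is a harmless (and correct) elaboration of a step the paper leaves implicit.
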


%%%%%%%%%%%%%%%%%%%%%%%%%%%%%%%%%%%%%%%%%%%%%%%%%%%%%%%%%

\begin{exam}
Let $\Gamma$ be the distance-regular Cayley graph 
associated with the finite group $\mathbb{Z}/4\mathbb{Z} = 
\set{\overline{0}, \overline{1}, \overline{2}, \overline{3}}$ 
and the generating set $\set{\overline{1}, \overline{3}}$. 
Figure 1 describes the vertices and the edges of $\Gamma$. 
For each $k \in I(\Gamma, \overline{0}) = \set{0, 1, 2}$, 
the $k$-adjacency matrix $A^{(k)} = 
(\delta_{k, d(x, y)})_{x, y \in \mathbb{Z}/4\mathbb{Z}}$ associated with $\Gamma$ 
and the $k$-transition matrix $P_k = (p_{k, i}^{j})_{i, j \in I(\Gamma, \overline{0})}$ 
associated with the pointed graph $(\Gamma, \overline{0})$ are computed as 
\begin{align*}
A^{(0)} &= 
\begin{pmatrix} 
1 & 0 & 0 & 0 \\ 
0 & 1 & 0 & 0 \\ 
0 & 0 & 1 & 0 \\ 
0 & 0 & 0 & 1 
\end{pmatrix}, & 
A^{(1)} &= 
\begin{pmatrix} 
0 & 1 & 1 & 0 \\ 
1 & 0 & 0 & 1 \\ 
1 & 0 & 0 & 1 \\ 
0 & 1 & 1 & 0 
\end{pmatrix}, & 
A^{(2)} &= 
\begin{pmatrix} 
0 & 0 & 0 & 1 \\ 
0 & 0 & 1 & 0 \\ 
0 & 1 & 0 & 0 \\ 
1 & 0 & 0 & 0 
\end{pmatrix}, \\
P_0 &= 
\begin{pmatrix} 
1 & 0 & 0 \\ 
0 & 1 & 0 \\ 
0 & 0 & 1 
\end{pmatrix}, & 
P_1 &= 
\begin{pmatrix} 
0 & 1 & 0 \\ 
\frac{1}{2} & 0 & \frac{1}{2} \\ 
0 & 1 & 0 
\end{pmatrix}, & 
P_2 &= 
\begin{pmatrix} 
0 & 0 & 1 \\ 
0 & 1 & 0 \\ 
1 & 0 & 0 
\end{pmatrix}.
\end{align*}
The matrices $A^{(k)}$'s and $P_k$'s mutually commute, respectively. 
We can straightforwardly check that $P_k D = D A_k$ for $k = 0$, $1$, $2$. 
\[
\begin{xy} 
(0,0)*{\circ}="A",
(-15,0)*{\circ}="B",
(0,-15)*{\circ}="C",
(-15,-15)*{\circ}="D",
{ "A" \ar @{-} "B" },
{ "A" \ar @{-} "C" },
{ "C" \ar @{-} "D" },
{ "B" \ar @{-} "D" },
(-18,3)*{\overline{0}},
(3,3)*{\overline{1}},
(3,-18)*{\overline{2}},
(-18,-18)*{\overline{3}},
(-7.5,-25)*{\mbox{{\rm Figure }}1},
\end{xy}
\]
\end{exam}

%%%%%%%%%%%%%%%%%%%%%%%%%%%%%%%%%%%%%%%%%%%%%%%%%%%%%%%%%
%%%%%%%%%%%%%%%%%%%%%%%%%%%%%%%%%%%%%%%%%%%%%%%%%%%%%%%%%

%For a hypergroup productive pointed graph with the symmetry conditions (S1) and (S2), 
%Theorem \ref{PD=DA} shows the connection between the adjacency matrices $A_k$'s 
%and the transition matrices $P_k$'s. 
%It is an open problem whether any pointed graph with (S1) and (S2) is 
%hypergroup productive. 

We will give a necessary and sufficient condition 
for the hypergroup productivity in the view point of adjacency matrices. 
That condition will enable us to easily show that a pointed graph $(\Gamma, v_0)$ 
with (S1) and (S2) is hypergroup productive when the diameter of $\Gamma$ is two. 
We will also determine the structures of the hermitian hypergroups derived 
from such pointed graphs by a combinatoric method. 

Since the fact in the following lemma can be found 
in \cite[Remark 5.1]{endo-mimu-sawa20} without any proofs, 
we shall give a proof for convenience.

%%%%%%%%%%%%%%%%%%%%%%%%%%%%%%%%%%%%%%%%%%%%%%%%%%%%%%%%

\begin{lemm} \label{assocomm}
Let $H = \set{x_i}_{i \in I}$ be a commutative pre-hypergroup and $Q_k = (q_{k, i}^{j})_{i,j\in
I}$ for each $k\in
I$, 
where each $q_{i, j}^{k}$ is given by $x_i \circ x_j = \sum_{k \in I} q_{i, j}^{k} x_k$. 
Then $Q_i Q_j = Q_j Q_i$ holds for every $i$, $j \in I$ 
if and only if $H$ satisfies the associativity and forms a hermitian hypergroup.
\end{lemm}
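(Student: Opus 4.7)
The plan is to reduce both the matrix commutativity $Q_iQ_j = Q_jQ_i$ and the associativity of $\circ$ to explicit identities in the structure constants $q_{i,j}^k$, and then to show that, under the standing hypothesis of commutativity $q_{i,j}^k = q_{j,i}^k$, those two identities are equivalent by a substitution of indices. To this end I would unpack the matrix product: since $(Q_k)_{i,j} = q_{k,i}^j$, one has
\[
(Q_i Q_j)_{l,m} = \sum_{n \in I} q_{i,l}^n\, q_{j,n}^m,
\]
so $Q_iQ_j = Q_jQ_i$ for every $i,j$ is equivalent to
\[
\sum_{n \in I} q_{i,l}^n\, q_{j,n}^m = \sum_{n \in I} q_{j,l}^n\, q_{i,n}^m \qquad (\forall\, i,j,l,m \in I),
\]
while a direct comparison of the coefficients of $x_d$ in $(x_a \circ x_b)\circ x_c$ and $x_a \circ (x_b \circ x_c)$ shows that associativity of $\circ$ is equivalent to
\[
\sum_{n \in I} q_{a,b}^n\, q_{n,c}^d = \sum_{n \in I} q_{b,c}^n\, q_{a,n}^d \qquad (\forall\, a,b,c,d \in I).
\]

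The heart of the argument is to derive each of these identities from the other using $q_{i,j}^k = q_{j,i}^k$. For the ``matrix commutativity $\Rightarrow$ associativity'' direction I would specialize $(i,j,l,m)$ in the first identity to $(c,a,b,d)$ and then swap lower indices inside the factors (so that $q_{c,b}^n$ becomes $q_{b,c}^n$ and $q_{c,n}^d$ becomes $q_{n,c}^d$), producing the associativity identity on the nose. The converse direction is an analogous bookkeeping exercise: I would apply the associativity identity once with $(a,b,c,d) = (j,i,l,m)$ and once with $(a,b,c,d) = (i,j,l,m)$, use commutativity on a single factor in the second to turn $q_{i,j}^n$ into $q_{j,i}^n$, and chain the two resulting equalities to recover the matrix-commutativity identity.

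With this equivalence in hand the rest of the conclusion is essentially formal. Given matrix commutativity, associativity is established, so the pre-hypergroup axioms upgrade $(\mathbb{C}H, \circ)$ to an associative commutative unital algebra; equipping it with the conjugate-linear extension of the identity map on $H$ makes it a $*$-algebra in which axiom~(3) of Definition~\ref{defDHG} reduces to the pre-hypergroup condition $q_{i,j}^0 \neq 0 \Leftrightarrow x_i = x_j$. The other direction is immediate since a hermitian hypergroup is associative by Definition~\ref{defDHG}. The main obstacle is simply keeping the index substitutions straight in the middle step; once both identities are written side by side, the algebraic content is purely symbolic.
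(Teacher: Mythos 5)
Your proposal is correct and follows essentially the same route as the paper: both reduce $Q_iQ_j=Q_jQ_i$ and associativity to identities in the structure constants $q_{i,j}^k$ and pass between them using the commutativity $q_{i,j}^k=q_{j,i}^k$, with the hermitian-hypergroup conclusion then following formally from the pre-hypergroup axioms. The only difference is cosmetic bookkeeping of indices, so no further changes are needed.
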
 

\begin{proof}
We find that the entries of $Q_i Q_j$ and $Q_j Q_i$ are given by 
\begin{align} 
&(Q_i Q_j)_{l, m} 
= \sum_{h \in I} q_{i, l}^{h} q_{j, h}^{m} 
= \sum_{h \in I} q_{i, l}^{h} q_{h, j}^{m} \label{eqQiQj} \\ 
\intertext{and}
&(Q_j Q_i)_{l, m} 
= \sum_{h \in I} q_{j, l}^{h} q_{i, h}^{m} 
= \sum_{h \in I} q_{l, j}^{h} q_{i, h}^{m}, \label{eqQjQi}
\end{align} 
respectively. 
Note that we have used the commutativity of $H$ to get the second equality 
in each calculations. 

On the other hand, $x_i \circ (x_l \circ x_j) = (x_i \circ x_l) \circ x_j$ 
for $i$, $j$, $l \in I$ is equivalent to the equations 
\begin{equation} \label{asso} 
\sum_{h \in I} q_{l, j}^{h} q_{i, h}^{m} = 
\sum_{h \in I} q_{i, l}^{h} q_{h, j}^{m} 
\end{equation} 
for all $m \in I$. 

Therefore, we can deduce that the associativity of $H$ is equivalent to 
$Q_i Q_j = Q_j Q_i$ for all $i$, $j \in I$ 
from \eqref{eqQiQj}, \eqref{eqQjQi} and \eqref{asso}. 
%A correspondence
%\[
%a \leftrightarrow j, \quad
%b \leftrightarrow h, \quad
%c \leftrightarrow i, \quad
%m \leftrightarrow k 
%\]
%concludes the result in this proposition.
\end{proof}

When $H = H(\Gamma, v_0)$ derives from a pointed graph $(\Gamma, v_0)$, 
the matrix $Q_k$ in Lemma \ref{assocomm} is replaced by the $k$-transition matrix 
$P_k = (p_{k, i}^{j})_{i, j \in I(\Gamma, v_0)}$ associated with $(\Gamma, v_0)$. 

%%%%%%%%%%%%%%%%%%%%%%%%%%%%%%%%%%%%%%%%%%%%%%%%%%%%%%%%

\begin{theo}\label{assoA_k}
Suppose that a pointed graph $(\Gamma, v_0)$ satisfies the conditions (S1) and (S2). 
Then, $(\Gamma, v_0)$ is hypergroup productive 
if and only if $D A_k A_l = D A_l A_k$ holds for every $k$, $l \in I(\Gamma, v_0)$. 
\end{theo}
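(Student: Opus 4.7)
The plan is to prove both directions by combining Theorem~\ref{PD=DA} (which ties the commutativity of $H(\Gamma, v_0)$ to the intertwining identity $P_h D = D A_h$) with Lemma~\ref{assocomm} (which ties associativity of a commutative pre-hypergroup to the commutativity of the transition matrices). The forward direction is then almost immediate: if $(\Gamma, v_0)$ is hypergroup productive, the commutativity of $H(\Gamma, v_0)$ supplies $D A_h = P_h D$ for every $h$, while associativity combined with Lemma~\ref{assocomm} supplies $P_k P_l = P_l P_k$, whence
\[
D A_k A_l = P_k D A_l = P_k P_l D = P_l P_k D = P_l D A_k = D A_l A_k.
\]

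For the converse, I would first specialise the hypothesis $D A_k A_l = D A_l A_k$ to the single basis vector $\bm{e}_{v_0}$ in order to extract commutativity of $H(\Gamma, v_0)$. Unwinding $A_k = \mu_k^{-1} A^{(k)}$ gives
\[
A_k A_l \bm{e}_{v_0} = \frac{1}{\mu_k \mu_l} \sum_{w \in S_l(v_0)} \sum_{u \in S_k(w)} \bm{e}_u,
\]
so, using $D \bm{e}_u = \bm{f}_{d(v_0, u)}$ and applying condition (S2) to collapse the outer sum, one obtains $(D A_k A_l \bm{e}_{v_0})_i = \abs{S_k(w_l) \cap S_i(v_0)} / \mu_k$ for any $w_l \in S_l(v_0)$. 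The double-counting identity used in \eqref{1}, combined with Lemma~\ref{commlemm}, identifies this coordinate as $p_{l, k}^{i}$, and symmetrically $(D A_l A_k \bm{e}_{v_0})_i = p_{k, l}^{i}$. Hence the hypothesis forces $p_{k, l}^{i} = p_{l, k}^{i}$ for every $i, k, l$, i.e. $H(\Gamma, v_0)$ is commutative.

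With commutativity in hand, Theorem~\ref{PD=DA} supplies $P_h D = D A_h$ for every $h$, and the hypothesis rewrites as $P_k P_l D = P_l P_k D$. The surjectivity of $D$ then upgrades this to $P_k P_l = P_l P_k$ on $\mathbb{C}^{I(\Gamma, v_0)}$, and Lemma~\ref{assocomm} converts the latter into the associativity of $H(\Gamma, v_0)$, so $(\Gamma, v_0)$ is hypergroup productive. The main obstacle is the commutativity-extraction step: a priori the hypothesis $D A_k A_l = D A_l A_k$ lives on the full free space $\mathbb{C}^V$, whereas commutativity of the $p_{i, j}^{k}$'s concerns only the distance distribution from $v_0$; the point is that evaluating at the single vector $\bm{e}_{v_0}$ is already enough, because the double-counting step of \eqref{1} rewrites the resulting count in terms of an arbitrary representative $w_l$ of $S_l(v_0)$, which is exactly the form of $p_{l, k}^{i}$ given by Lemma~\ref{commlemm}.
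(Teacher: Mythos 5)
Your proposal is correct and follows essentially the same route as the paper: extract commutativity of $H(\Gamma, v_0)$ by evaluating $D A_k A_l = D A_l A_k$ at $\bm{e}_{v_0}$ and invoking (S2) with the double-counting identity of Lemma~\ref{commlemm}, then combine Theorem~\ref{PD=DA}, the surjectivity of $D$, and Lemma~\ref{assocomm} to get associativity, with the reverse implication given by the same chain $D A_k A_l = P_k D A_l = P_k P_l D = P_l P_k D = P_l D A_k = D A_l A_k$. The only difference is cosmetic (you collapse the sum over $S_l(v_0)$ directly to $p_{l,k}^{i}$, whereas the paper first reduces both sides to $\mu_i\abs{S_k(v_i)\cap S_l(v_0)}$ and $\mu_i\abs{S_l(v_i)\cap S_k(v_0)}$ before applying the second assertion of Lemma~\ref{commlemm}).
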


\begin{proof}
Suppose that $D A_k A_l = D A_l A_k$ holds for every $k$, $l \in I(\Gamma, v_0)$. 

Let $\set{\bm{e}_v}_{v \in V}$ be the standard basis of $\mathbb{C}^V$. 
Then we have 
\begin{align*} 
& D A_k A_l \bm{e}_{v_0} 
= \frac{1}{\mu_k \mu_l} D A^{(k)} A^{(l)} \bm{e}_{v_0} 
%= \frac{1}{\mu_k \mu_l} D A^{(k)} \left( \sum_{w \in V} \delta_{d(v, w), l} \delta_{v_0, w} 
%\right)_{v \in V} \\ 
= \frac{1}{\mu_k \mu_l} D A^{(k)} (\delta_{l, d(v, v_0)})_{v \in V} \\ 
=& \frac{1}{\mu_k \mu_l} D \left( \sum_{w \in V} \delta_{k, d(v, w)} \delta_{l, d(w, v_0)} 
\right)_{v \in V} 
= \frac{1}{\mu_k \mu_l} D (\abs{S_k(v) \cap S_l(v_0)})_{v \in V} \\ 
=& \frac{1}{\mu_k \mu_l} \left( \sum_{v \in S_i(v_0)} \abs{S_k(v) \cap S_l(v_0)} 
\right)_{i \in I(\Gamma, v_0)} \\ 
= & \frac{1}{\mu_k \mu_l} \left( \mu_i \abs{S_k(v_i) \cap S_l(v_0)} 
\right)_{i \in I(\Gamma, v_0)}, 
\end{align*}
where each $v_i \in S_i(v_0)$ is arbitrarily taken. 
Similar calculations yield 
\begin{equation*} 
D A_l A_k \bm{e}_{v_0} = 
\frac{1}{\mu_k \mu_l} \left( \mu_i \abs{S_l(v_i) \cap S_k(v_0)} 
\right)_{i \in I(\Gamma, v_0)}. 
\end{equation*} 

By the assumption $D A_k A_l = D A_l A_k$, we have
\begin{equation*} 
\abs{S_k(v_i) \cap S_l(v_0)} = \abs{S_l(v_i) \cap S_k(v_0)} 
\end{equation*} 
for all $i \in I(\Gamma, v_0)$. 
Hence Lemma \ref{commlemm} implies that $p_{k, l}^{i} = p_{l, k}^{i}$ 
and the commutativity of $H(\Gamma, v_0)$. 

Using Theorem \ref{PD=DA} repeatedly, we have 
\begin{equation} \label{pa}
P_k P_l D = P_k D A_l = D A_k A_l = D A_l A_k = P_l D A_k = P_l P_k D. 
\end{equation}
Since $D$ is surjective, the matrices $P_k$ and $P_l$ commute. 
We can apply Lemma \ref{assocomm} to obtain the associativity 
of the pre-hypergroup $H(\Gamma, v_0)$, 
and find that $H(\Gamma, v_0)$ forms a hermitian hypergroup. 

Conversely, if the pointed graph $(\Gamma, v_0)$ is hypergroup productive, 
then Lemma \ref{assocomm} and the equations \eqref{pa} imply 
that $D A_k A_l = D A_l A_k$ holds for every $k$, $l \in I(\Gamma, v_0)$. 
\end{proof}

%%%%%%%%%%%%%%%%%%%%%%%%%%%%%%%%%%%%%%%%%%%%%%%%%%%%%%%%

Theorem \ref{assoA_k} leads us to the theorem introduced in Section \ref{secIntro}. 

\begin{coro} \label{corMain}
Suppose that a pointed graph $(\Gamma, v_0)$ satisfies the conditions (S1) and (S2). 
If the $k$-adjacency matrices $A^{(k)}$'s associated with $\Gamma$ 
mutually commute, 
then $(\Gamma, v_0)$ is hypergroup productive. 
\end{coro}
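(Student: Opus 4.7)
The plan is to derive this corollary as an immediate consequence of Theorem \ref{assoA_k}, since the hard analytic work (relating commutativity of the pre-hypergroup to the matrix identity $P_h D = D A_h$, and then packaging this into a hypergroup-productivity criterion in terms of $D A_k A_l$) has already been done.

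First I would recall that by the Proposition preceding Theorem \ref{PD=DA}, under condition (S1) we have the rescaling $A_k = \frac{1}{\mu_k} A^{(k)}$, where $\mu_k = |S_k(v_0)|$ is a positive scalar. Thus, for any $k, l \in I(\Gamma, v_0)$,
\begin{equation*}
A_k A_l = \frac{1}{\mu_k \mu_l} A^{(k)} A^{(l)}, \qquad A_l A_k = \frac{1}{\mu_k \mu_l} A^{(l)} A^{(k)}.
\end{equation*}
Hence the hypothesis $A^{(k)} A^{(l)} = A^{(l)} A^{(k)}$ immediately upgrades to $A_k A_l = A_l A_k$ for all $k, l \in I(\Gamma, v_0)$.

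Next I would apply the linear map $D : \mathbb{C}^V \to \mathbb{C}^{I(\Gamma, v_0)}$ on the left of this identity. Linearity gives $D A_k A_l = D A_l A_k$ for all $k, l \in I(\Gamma, v_0)$ trivially, since the two matrices on the right-hand side are already equal. Theorem \ref{assoA_k} then furnishes the conclusion: $(\Gamma, v_0)$ is hypergroup productive.

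I do not expect any substantive obstacle in this derivation; all the content sits in Theorem \ref{assoA_k}, which has been established. The only thing worth emphasizing in the write-up is the trivial but essential step of passing from the unnormalized $A^{(k)}$ to the doubly stochastic $A_k$ via positive scalars, since it is the $A_k$ (rather than the $A^{(k)}$) that appear in the hypothesis of Theorem \ref{assoA_k}. The proof is therefore just two or three lines.
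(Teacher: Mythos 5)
Your proposal is correct and follows the same route as the paper: rescale $A^{(k)}$ to $A_k$ by the positive scalars $\mu_k$, deduce $D A_k A_l = D A_l A_k$ from the assumed commutativity, and invoke Theorem \ref{assoA_k}. No issues.
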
 

\begin{proof} 
We have from the assumption 
\begin{equation*} 
D A_k A_l = 
\frac{1}{\mu_k \mu_l} D A^{(k)} A^{(l)} = 
\frac{1}{\mu_k \mu_l} D A^{(l)} A^{(k)} = 
D A_l A_k 
\end{equation*} 
for all $k$, $l \in I(\Gamma, v_0)$. 
Applying Theorem \ref{assoA_k} makes the end of the proof. 
\end{proof}

%%%%%%%%%%%%%%%%%%%%%%%%%%%%%%%%%%%%%%%%%%%%%%%%%%%%%%%%
%%%%%%%%%%%%%%%%%%%%%%%%%%%%%%%%%%%%%%%%%%%%%%%%%%%%%%%%

\section{The case of diameter two} \label{secApplication}

In this section, we discuss mainly the case where the diameter of $\Gamma$ is two 
under the conditions (S1) and (S2). 
We can apply Corollary \ref{corMain} to such a pointed graph as follows. 
%First comes the result that any pointed graph $(\Gamma, v_0)$ with (S1) and (S2) 
%admits the hypergroup productivity when $\Gamma$ is of the diameter two. 

\begin{coro} \label{diam2hypg} 
Any pointed graph $(\Gamma, v_0)$ with the conditions (S1) and (S2) 
such that $I(\Gamma, v_0) = \set{0, 1, 2}$ is hypergroup productive. 
\end{coro}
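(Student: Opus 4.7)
The plan is to apply Corollary \ref{corMain} directly by verifying that the three adjacency matrices $A^{(0)}$, $A^{(1)}$, $A^{(2)}$ mutually commute. Since $I(\Gamma, v_0) = \set{0, 1, 2}$, the graph $\Gamma$ has diameter at most $2$, and by (S1) with $i = 1, 2$ the cardinalities $\mu_1$ and $\mu_2$ are finite and independent of the chosen vertex, so $V$ is finite and $\Gamma$ is regular in the usual graph-theoretic sense.

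The key algebraic observation is that when $I(\Gamma, v_0) = \set{0, 1, 2}$, the three adjacency matrices satisfy the simple relation
\begin{equation*}
A^{(0)} + A^{(1)} + A^{(2)} = J,
\end{equation*}
where $J$ denotes the all-ones matrix, as was noted in Section \ref{secMethod}. Combined with $A^{(0)} = \bm{1}$, this lets me rewrite $A^{(2)} = J - \bm{1} - A^{(1)}$, so every $A^{(k)}$ lies in the $\Complex$-span of $\set{\bm{1}, J, A^{(1)}}$. Hence the nontrivial commutativity to check collapses to the single relation $A^{(1)} J = J A^{(1)}$, because $\bm{1}$ commutes with everything and $A^{(1)}$ obviously commutes with itself.

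First I would verify $A^{(1)} J = J A^{(1)}$. This follows because (S1) forces every row sum and every column sum of the symmetric matrix $A^{(1)}$ to equal $\mu_1$, so both products are equal to $\mu_1 J$. Then I would compute
\begin{align*}
A^{(1)} A^{(2)} &= A^{(1)}(J - \bm{1} - A^{(1)}) = \mu_1 J - A^{(1)} - (A^{(1)})^{2}, \\
A^{(2)} A^{(1)} &= (J - \bm{1} - A^{(1)}) A^{(1)} = \mu_1 J - A^{(1)} - (A^{(1)})^{2},
\end{align*}
which gives $A^{(1)} A^{(2)} = A^{(2)} A^{(1)}$. All other commutations involve $A^{(0)} = \bm{1}$ and are trivial. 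Therefore the family $\set{A^{(k)}}_{k \in I(\Gamma, v_0)}$ mutually commutes.

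Finally I would invoke Corollary \ref{corMain}, which asserts that mutual commutativity of the $A^{(k)}$'s together with (S1) and (S2) implies that $(\Gamma, v_0)$ is hypergroup productive, to conclude the proof. The argument is almost entirely structural; there is no real obstacle because the diameter-two hypothesis leaves essentially only one nontrivial adjacency matrix, and (S1) is precisely the hypothesis that lets $J$ interact nicely with it.
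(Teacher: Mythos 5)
Your proof is correct and follows essentially the same route as the paper: reduce to checking $A^{(1)}A^{(2)} = A^{(2)}A^{(1)}$, use (S1) to see that $A^{(1)}$ commutes with the all-ones matrix $J$, write $A^{(2)} = J - A^{(0)} - A^{(1)}$, and invoke Corollary \ref{corMain}. Your version merely spells out the row/column-sum computation a bit more explicitly than the paper does.
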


\begin{proof}
To apply Corollary \ref{corMain}, we should check that $A^{(k)}$ and $A^{(l)}$ commute 
for all $k$ and $l$ such that $0 \leq k \leq l \leq 2$. 

Recall that $A^{(0)} = \bm{1}$, and we can easily check that 
$A^{(k)} A^{(l)} = A^{(l)} A^{(k)}$ holds except for the case of $(k, l) = (1, 2)$. 
Thus what we have to show is $A^{(1)} A^{(2)} = A^{(2)} A^{(1)}$. 

Let $J$ denote the square matrix of order $\abs{V}$ all of whose entries are $1$. 
Then the condition (S1) makes $A^{(1)}$ and $J$ commute. 
Therefore, $A^{(2)} = J - A^{(0)} - A^{(1)}$ and $A^{(1)}$ must commute. 
Apply Corollary \ref{corMain} to complete the proof. 
\end{proof}

%%%%%%%%%%%%%%%%%%%%%%%%%%%%%%%%%%%%%%%%%%%%%%%%%%%%%%%%

We can determine the structure of the hermitian hypergroup derived 
from a pointed graph as in Corollary \ref{diam2hypg}. 
To determine the structure of a hermitian hypergroup $H$ is 
to compute all the convolutions in the form of $x_i \circ x_j$. 

\begin{theo} \label{diam2}
Let $(\Gamma, v_0)$ be a pointed graph with the conditions (S1) and (S2) 
such that $I(\Gamma, v_0) = \set{0, 1, 2}$. 
Then the structure of the hermitian hypergroup 
$H(\Gamma, v_0) = \set{x_0, x_1, x_2}$ is given by 
\begin{align*} 
&x_1 \circ x_1 
= \frac{1}{\mu_1} x_0 + \frac{m}{\mu_1} x_1 + \frac{\mu_1-1-m}{\mu_1} x_2, \\ 
&x_1 \circ x_2 = x_2 \circ x_1 
= \frac{\mu_1-1-m}{\mu_2} x_1 + \frac{\mu_2-\mu_1+1+m}{\mu_2} x_2, \\ 
&x_2 \circ x_2 
= \frac{1}{\mu_2} x_0 + \left( \frac{\mu_1}{\mu_2} - \frac{\mu_1(\mu_1-1-m)}{\mu_2^2} 
\right) x_1 \\ 
&\hspace{160pt} + \left( 1 - \frac{1}{\mu_2} - \frac{\mu_1}{\mu_2} + 
\frac{\mu_1(\mu_1-1-m)}{\mu_2^2} \right) x_2, 
\end{align*}
where $m = \abs{S_1(v_1) \cap S_1(v_0)}$ with $v_1 \in S_1(v_0)$ arbitrarily taken. 
\end{theo}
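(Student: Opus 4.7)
The plan is to invoke Corollary \ref{diam2hypg}, which already guarantees that $H(\Gamma, v_0)$ is a hermitian hypergroup, so the coefficients $p_{i, j}^{k}$ appearing in $x_i \circ x_j = \sum_{k} p_{i, j}^{k} x_k$ are nonnegative reals summing to $1$ and satisfy $p_{i, j}^{k} = p_{j, i}^{k}$. Since $x_0$ acts as the unit, it suffices to compute $x_1 \circ x_1$, $x_1 \circ x_2$ and $x_2 \circ x_2$. For this I appeal to Lemma \ref{commlemm}, which yields
\begin{equation*}
p_{i, j}^{k} = \frac{\mu_k \abs{S_j(z) \cap S_i(v_0)}}{\mu_i \mu_j}
\end{equation*}
for any choice of $z \in S_k(v_0)$. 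Everything therefore reduces to evaluating a short list of intersection cardinalities.

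Two cases are immediate by taking $z = v_0 \in S_0(v_0)$: $\abs{S_i(v_0) \cap S_i(v_0)} = \mu_i$ and $\abs{S_j(v_0) \cap S_i(v_0)} = 0$ whenever $i \neq j$. The essential combinatorial input comes from fixing a vertex $v_1 \in S_1(v_0)$ and exploiting $\diam(\Gamma) = 2$: every vertex lies at distance $0$, $1$ or $2$ from $v_1$, so
\begin{equation*}
S_1(v_0) = \set{v_1} \sqcup (S_1(v_1) \cap S_1(v_0)) \sqcup (S_2(v_1) \cap S_1(v_0)),
\end{equation*}
which gives $\abs{S_2(v_1) \cap S_1(v_0)} = \mu_1 - 1 - m$. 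Splitting $S_2(v_1)$ analogously, and noting that $d(v_0, v_1) = 1$ forces $S_0(v_0) \cap S_2(v_1) = \varnothing$, we obtain $\abs{S_2(v_1) \cap S_2(v_0)} = \mu_2 - (\mu_1 - 1 - m)$.

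Substituting these values into the displayed formula produces all of the coefficients. The $k = 0$ and $k = 1$ coefficients of $x_1 \circ x_1$ come out as $1/\mu_1$ and $m/\mu_1$, and normalization via $\sum_{k} p_{1, 1}^{k} = 1$ yields the coefficient of $x_2$. For $x_1 \circ x_2$ one uses $\abs{S_2(v_1) \cap S_1(v_0)} = \mu_1 - 1 - m$ in the displayed formula (with $i = 1$, $j = 2$, $z = v_1$) and again normalization for the $x_2$-coefficient, while $x_2 \circ x_1$ matches by the commutativity above. Finally, for $x_2 \circ x_2$ the $k = 0$ coefficient is $1/\mu_2$, the $k = 1$ coefficient equals $\mu_1 \abs{S_2(v_1) \cap S_2(v_0)}/\mu_2^{2}$, and the identity $\mu_1(\mu_2 - \mu_1 + 1 + m)/\mu_2^{2} = \mu_1/\mu_2 - \mu_1(\mu_1 - 1 - m)/\mu_2^{2}$ brings it into the stated form; the $k = 2$ coefficient then follows by normalization. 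I do not anticipate any substantive obstacle: the proof is a bookkeeping exercise around two short partitioning identities combined with Lemma \ref{commlemm}, and the only care required is the algebraic rewriting just noted.
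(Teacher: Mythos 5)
Your proposal is correct: all the coefficients you obtain agree with the statement, the two partition identities are valid (every vertex is within distance $2$ of $v_1$, and $v_0 \notin S_2(v_1)$ since $d(v_0,v_1)=1$), and the normalization $\sum_k p_{i,j}^k = 1$ holds already at the pre-hypergroup level, so its use is legitimate. Your route differs from the paper's in two respects. First, the paper works with the untransposed formula $p_{i,j}^{k} = \abs{S_j(v_i)\cap S_k(v_0)}/\mu_j$ (with $v_i \in S_i(v_0)$), so it must compute intersection numbers based at a vertex of $S_2(v_0)$ as well; to transfer information from $v_1$ to such a vertex it redoes, by hand, a double-counting of edges between $S_1(v_0)$ and $S_2(v_0)$, yielding $\mu_1\abs{S_1(v_1)\cap S_2(v_0)} = \mu_2\abs{S_1(v_2)\cap S_1(v_0)}$, and then fills in all nine numbers $m_{i,j}^{k}$ via the two sum relations. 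You instead route every coefficient through Lemma \ref{commlemm}, whose extra factor $\mu_k/\mu_i$ already encodes exactly that double-count in general form, so all computations stay centered at the single vertex $v_1$ and only three intersection cardinalities are needed. Second, for $x_2\circ x_1$ you invoke the commutativity supplied by Corollary \ref{diam2hypg}, whereas the paper obtains $p_{2,1}^{k}=p_{1,2}^{k}$ as a byproduct of its explicit count; both are legitimate here since Corollary \ref{diam2hypg} precedes the theorem and its conclusion is presupposed in the statement. The net effect is that your argument is a somewhat shorter bookkeeping exercise, at the cost of leaning on the lemma and the corollary, while the paper's version is more self-contained and exhibits all the intersection numbers explicitly.
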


\begin{proof}
By the definition \eqref{pijk} of $p_{i, j}^k$ and the conditions $(S1)$ and $(S2)$, $p_{i, j}^{k}$ can be written as 
\begin{equation*} 
p_{i, j}^{k} = \frac{\abs{S_j(v_i) \cap S_k(v_0)}}{\mu_j} 
\end{equation*} 
for $i$, $j$, $k \in I(\Gamma, v_0)$, where $v_i \in S_i(v_0)$ is arbitrarily taken. 

Put $m_{i, j}^{k} = \mu_j p_{i, j}^{k} = \abs{S_j(v_i) \cap S_k(v_0)}$. 
%which represents the number of vertices $w \in S_k(v_0)$ such that $d(w, v_i) = j$. 
Then we have 
\begin{align}
& m_{i, j}^{0} + m_{i, j}^{1} + m_{i, j}^{2} = \mu_j \label{qij} \\
\intertext{and} 
& m_{i, 0}^{k} + m_{i, 1}^{k} + m_{i, 2}^{k} = \mu_k \label{qik} 
\end{align} 
for all $i$, $j$, $k \in \set{0, 1, 2}$. 
The former relation \eqref{qij} provides $m_{1, 1}^{2} = \mu_1 - 1 - m_{1, 1}^{1}$ 
since $m_{1, 1}^{0} = 1$, 
and the latter relation \eqref{qik} provides
\[m_{1, 2}^{1} = \mu_1 - 1 - m_{1, 1}^{1}
\]
since $m_{1, 0}^{1} = 1$. 
Furthermore, we have $m_{1, 0}^{2} = m_{1, 2}^{0} = m_{2, 1}^{0} = 0$ and 
\begin{equation*} 
m_{1, 2}^{2} = \mu_2 - m_{1, 0}^{2} - m_{1, 1}^{2} = \mu_2 - \mu_1 + 1 + m_{1, 1}^{1}. 
\end{equation*} 

In addition, we can find that $\mu_1 m_{1, 1}^{2} = \mu_2 m_{2, 1}^{1}$ holds. 
Indeed, $m_{1, 1}^{2} = \abs{S_1(v_1) \cap S_2(v_0)}$ represents 
the number of vertices belonging to $S_2(v_0)$ which are adjacent to $v_1$. 
This is equal to the number of edges which start from $v_1$ and reach $S_2(v_0)$. 
Thus the number $\mu_1 m_{1, 1}^{2}$ counts how many edges start from $S_1(v_0)$ 
and reach $S_2(v_0)$ under the condition (S1). 
By the same argument, we find that the number $\mu_2 m_{2, 1}^{1}$ counts 
how many edges start from $S_2(v_0)$ and reach $S_1(v_0)$. 
These two numbers coincide so that we have $\mu_1 m_{1, 1}^{2} = \mu_2 m_{2, 1}^{1}$. 

Thanks to this relation, we can get 
\begin{align*} 
& m_{2, 1}^{1} 
= \frac{\mu_1 m_{1, 1}^{2}}{\mu_2} 
= \frac{\mu_1 (\mu_1 - 1 - m_{1, 1}^{1})}{\mu_2}, \\ 
& m_{2, 1}^{2} 
= \mu_1 - m_{2, 1}^{0} - m_{2, 1}^{1} 
= \mu_1 - \frac{\mu_1 (\mu_1 - 1 - m_{1, 1}^{1})}{\mu_2}. 
\end{align*} 
Noting that $m_{2, 0}^{1} = 0$ and $m_{2, 0}^{2} = 1$, we can also get 
\begin{align*}
& m_{2, 2}^{1} 
= \mu_1 - m_{2, 0}^{1} - m_{2, 1}^{1} 
= \mu_1 - \frac{\mu_1 (\mu_1 - 1 - m_{1, 1}^{1})}{\mu_2}, \\ 
& m_{2, 2}^{2} 
= \mu_2 - m_{2, 0}^{2} - m_{2, 1}^{2} 
= \mu_2 - 1 - \mu_1 + \frac{\mu_1 (\mu_1 - 1 - m_{1, 1}^{1})}{\mu_2}. 
\end{align*} 
We also have $m_{2, 2}^{0} = 1$.

\if0
The numbers $q_{i,j}^k$'s other than the above can be easily computed as follows:
\begin{align*}
&q_{0,0}^0=1,\quad
q_{0,0}^1=q_{0,0}^2=0;\quad
q_{0,1}^0=q_{0,1}^2=0,\quad
q_{0,1}^1=\mu_1;\\
&q_{0,2}^0=q_{0,2}^1=0,\quad
q_{0,2}^2=\mu_2;\quad
q_{1,0}^0=q_{1,0}^2=0,\quad
q_{1,0}^1=1;\\
&q_{2,0}^0=q_{2,0}^1=0,\quad
q_{2,0}^2=1.
\end{align*}
\fi

Therefore, putting $m = m_{1, 1}^{1}$, 
we can list the numbers $p_{i, j}^{k} = m_{i, j}^{k}/\mu_j$ as follows: 
\begin{align*}
\if0
&p_{0,0}^0=1,\quad
p_{0,0}^1=p_{0,0}^2=0;\quad
p_{0,1}^0=p_{0,1}^2=0,\quad
q_{0,1}^1=1;\\
&p_{0,2}^0=p_{0,2}^1=0,\quad
p_{0,2}^2=1;\quad
p_{1,0}^0=p_{1,0}^2=0,\quad
p_{1,0}^1=1;\\
&p_{1,1}^0=\frac{1}{\mu_1},\quad
p_{1,1}^1=\frac{l}{\mu_1},\quad
p_{1,1}^2=\frac{\mu_1-1-l}{\mu_1};\\
&p_{1,2}^0=0,\quad
p_{1,2}^1=\frac{\mu_1-1-l}{\mu_2},\quad
q_{1,2}^2=\frac{\mu_2-\mu_1+1+l}{\mu_2};\\
&p_{2,0}^0=p_{2,0}^1=0,\quad
p_{2,0}^2=1;\\
&p_{2,1}^0=0,\quad
p_{2,1}^1=\frac{\mu_1-1-l}{\mu_2},\quad
p_{2,1}^2=1-\frac{\mu_1-1-l}{\mu_2};\\
&p_{2,2}^0=\frac{1}{\mu_2},\quad
p_{2,2}^1=\frac{\mu_1}{\mu_2}-\frac{\mu_1(\mu_1-1-l)}{\mu_2^2},\\
&p_{2,2}^2=1-\frac{1}{\mu_2}-\frac{\mu_1}{\mu_2}+\frac{\mu_1(\mu_1-1-l)}{\mu_2^2}.
\fi
& p_{1, 1}^{0} = \frac{1}{\mu_1}, \quad 
p_{1, 1}^{1} = \frac{m}{\mu_1}, \quad 
p_{1, 1}^{2} = \frac{\mu_1 - 1 - m}{\mu_1}; \\ 
& p_{1, 2}^{0} = p_{2, 1}^{0} = 0, \quad 
p_{1, 2}^{1} = p_{2, 1}^{1} = \frac{\mu_1 - 1 - m}{\mu_2}; \\ 
& p_{1, 2}^{2} = p_{2, 1}^{2} = \frac{\mu_2 - \mu_1 + 1 + m}{\mu_2}; \\ 
& p_{2, 2}^{0} = \frac{1}{\mu_2}, \quad 
p_{2, 2}^{1} = \frac{\mu_1}{\mu_2} - \frac{\mu_1 (\mu_1 - 1 - m)}{\mu_2^2}, \\ 
& p_{2, 2}^{2} = 1 - \frac{1}{\mu_2} - \frac{\mu_1}{\mu_2} + 
\frac{\mu_1 (\mu_1 - 1 - m)}{\mu_2^2}. 
\end{align*}
(If either $i = 0$ or $j = 0$, we can easily compute $x_i \circ x_j$ 
since $x_0$ is the unit.) 

This completes the proof. 
\end{proof}

%%%%%%%%%%%%%%%%%%%%%%%%%%%%%%%%%%%%%%%%%%%%%%%%%%%%%%%%

\begin{rema}
According to \cite[Section 4]{wild02}, the pre-hypergroup 
$H = \set{c_0, c_1, c_2}$ with the structure 
\begin{align*}
& c_1 \circ c_1 
= \frac{1}{\omega_1} c_0 + \alpha_1 c_1 + \beta_1 c_2, \quad 
c_2 \circ c_2 
= \frac{1}{\omega_2} c_0 + \beta_2 c_1 + \alpha_2 c_2, \\ 
& c_1 \circ c_2 = c_2 \circ c_1 
= \gamma_1 c_1 + \gamma_2 c_2 
\end{align*}
%$x_i \circ x_j = \sum_{k = 0}^{2} q_{i, j}^{k} x_k$ is characterized by the relations 
becomes associative if and only if both of the relations 
\begin{equation} \label{wild}
\beta_1 \omega_1 = \gamma_1 \omega_2, \quad 
\beta_2 \omega_2 = \gamma_2 \omega_1 
%\frac{q_{1, 1}^{2}}{q_{1, 1}^{0}} = \frac{q_{1, 2}^{1}}{q_{2, 2}^{0}}, \quad 
%\frac{q_{1, 2}^{2}}{q_{1, 1}^{0}} = \frac{q_{2, 2}^{1}}{q_{2, 2}^{0}}. 
\end{equation} 
hold. 

This fact enable us to give another proof of Corollary \ref{diam2hypg}. 
Let $(\Gamma, v_0)$ be a pointed graph with (S1) and (S2) 
such that $I(\Gamma, v_0) = \set{0, 1, 2}$, as in Corollary \ref{diam2hypg}. 
%It has been shown that $(\Gamma, v_0)$ produces a hermitian hypergroup 
%as Corollary \ref{diam2hypg}, 
Then, by putting 
\begin{align*} 
& \omega_1 = \mu_1, \quad \omega_2 = \mu_2; \quad 
\alpha_1 = \frac{m}{\mu_1}, \quad 
\alpha_2 = 1 - \frac{1}{\mu_2} - \frac{\mu_1}{\mu_2} 
+ \frac{\mu_1(\mu_1-1-m)}{\mu_2^2}; \\ 
& \beta_1 = \frac{\mu_1-1-m}{\mu_1}, \quad 
\beta_2 = \frac{\mu_1}{\mu_2} - \frac{\mu_1 (\mu_1-1-m)}{\mu_2^2}; \\ 
& \gamma_1 = \frac{\mu_1-1-m}{\mu_2}, \quad 
\gamma_2 = \frac{\mu_2-\mu_1+1+m}{\mu_2}, 
\end{align*} 
where $m$ is the number defined in Theorem \ref{diam2}, 
we can easily check that these coefficients satisfy \eqref{wild}. 
%the structure of $H(\Gamma, v_0)$ as above 
%$\set{p_{i, j}^{k}}_{i, j, k = 0}^{2}$ in Theorem \ref{diam2} 
%satisfies \eqref{wild} by replacing $q_{i, j}^{k}$ with $p_{i, j}^{k}$. 
Therefore, $H(\Gamma, v_0)$ is associative and forms a hermitian hypergroup. 

Note that $\alpha_i$ can be described by $\omega_i$ and $\beta_i$ for $i = 1$, $2$, 
and hence the relations \eqref{wild} imply 
that any hermitian hypergroup with order three is determined by three parameters $\omega_1,\omega_2$ and $\beta_1$. 
On the other hand, the structure of the hermitian hypergroup $H(\Gamma, v_0)$ 
derived from $(\Gamma, v_0)$ is determined by three parameters 
$\mu_1$, $\mu_2$ and $m$. 
\end{rema}
%%%%%%%%%%%%%%%%%%%%%%%%%%%%%%%%%%%%%%%%%%%%%%%%%%%%%%%%

Finally, we present an example of hypergroup productive pointed graphs 
$(\Gamma,v_0)$ with the conditions (S1) and (S2) such that $\abs{I(\Gamma,v_0)} > 2$ 
and $\Gamma$ is not distance-regular. 

\begin{exam}
%\textcolor{red}{Check!}

Let $(\Gamma,v_0)$ be the pointed graph with $I(\Gamma,v_0)=\{0,1,2,3\}$ drawn as Figure 2. 
We can check that $(\Gamma,v_0)$ produces the hermitian hypergroup with the following structure. 
\begin{align*}
&x_1\circ
x_1=x_2\circ
x_2=\frac{1}{6}x_0+\frac{1}{3}x_1+\frac{1}{2}x_2;\\
&x_1\circ
x_2=x_2\circ
x_1=\frac{1}{2}x_1+\frac{1}{3}x_2+\frac{1}{6}x_3;\\
&x_1\circ
x_3=x_3\circ
x_1=x_2;\quad
x_2\circ
x_3=x_3\circ
x_2=x_1;\quad
x_3\circ
x_3=x_0.
\end{align*}

Also, $(\Gamma, v_0)$ satisfies the conditions (S1) and (S2), and the graph $\Gamma$ is not distance-regular. 
\[
\begin{xy}
(0,7)*{\circ}="A",
(-10,-10)*{\circ}="B",
(10,-10)*{\circ}="C",
(-50,-15)*{\circ}="D",
(50,-15)*{\circ}="E",
(-30,-20)*{\circ}="F",
(30,-20)*{\circ}="G",
(0,-57)*{\circ}="A'",
(-10,-30)*{\circ}="B'",
(10,-30)*{\circ}="C'",
(-50,-35)*{\circ}="D'",
(50,-35)*{\circ}="E'",
(-30,-40)*{\circ}="F'",
(30,-40)*{\circ}="G'",
{ "B" \ar @{-} "B'" },
{ "C" \ar @{-} "C'" },
{ "D" \ar @{-} "D'" },
{ "E" \ar @{-} "E'" },
{ "B" \ar@[red] @{-} "C'" },
{ "C" \ar@[red] @{-} "B'" },
{ "D" \ar@[red] @{-} "B'" },
{ "B" \ar@[red] @{-} "D'" },
{ "C" \ar@[red] @{-} "E'" },
{ "E" \ar@[red] @{-} "C'" },
{ "A" \ar @{-} "B" },
{ "A" \ar @{-} "C" },
{ "A" \ar @{-} "D" },
{ "A" \ar @{-} "E" },
{ "A" \ar @{-} "F" },
{ "A" \ar @{-} "G" },
{ "B" \ar @{-} "C" },
{ "B" \ar @{-} "D" },
{ "C" \ar @{-} "E" },
{ "E" \ar @{-} "G" },
{ "D" \ar @{-} "F" },
{ "F" \ar @{-} "G" },
{ "A'" \ar @{-} "B'" },
{ "A'" \ar @{-} "C'" },
{ "A'" \ar @{-} "D'" },
{ "A'" \ar @{-} "E'" },
{ "A'" \ar @{-} "F'" },
{ "A'" \ar @{-} "G'" },
{ "B'" \ar @{-} "C'" },
{ "B'" \ar @{-} "D'" },
{ "C'" \ar @{-} "E'" },
{ "E'" \ar @{-} "G'" },
{ "D'" \ar @{-} "F'" },
{ "F'" \ar @{-} "G'" },
{ "F" \ar @{-} "F'" },
{ "G" \ar @{-} "G'" },
{ "D" \ar@[green] @{-} "F'" },
{ "F" \ar@[green] @{-} "D'" },
{ "F" \ar@[green] @{-} "G'" },
{ "G" \ar@[green] @{-} "F'" },
{ "G" \ar@[green] @{-} "F'" },
{ "G" \ar@[green] @{-} "E'" },
{ "E" \ar@[green] @{-} "G'" },
(0,11)*{v_0},
(0,-63)*{\mbox{{\rm
Figure }}2},
\end{xy}
\]
\end{exam}

\subsection*{Acknowledgements}
The authors would like to thank Kenta Endo and Ippei Mimura for their helpful comments. This work was supported by JSPS KAKENHI Grant-in-Aid for Research Activity Start-up (No. 19K23403).

\end{document}